\documentclass[a4paper,10pt]{article}

\usepackage{amsmath,amssymb,amsthm,amscd}
\usepackage[mathscr]{eucal}
\usepackage{multirow}
\usepackage{hhline}
\usepackage[all]{xy}

\setlength{\oddsidemargin}{5mm}  
\setlength{\evensidemargin}{5mm} 
\setlength{\topmargin}{0mm}      
\setlength{\topskip}{5mm}        
\setlength{\headsep}{7mm}        
\setlength{\headheight}{10mm}     
\setlength{\textwidth}{150mm}    
\setlength{\textheight}{225mm}   

\makeatletter
 
  \@addtoreset{equation}{subsection}
\makeatother

\theoremstyle{plain}

\newcommand{\plim}{\varprojlim}
\newcommand{\mcal}{\mathcal}
\newcommand{\mbf}{\mathbf}

\newcommand{\mfrak}{\mathfrak}
\newcommand{\mbb}{\mathbb}
\newcommand{\mrm}{\mathrm}
\newcommand{\mf}{\mathfrak}
\newcommand{\mfS}{\mathfrak{S}}
\newcommand{\vphi}{\varphi}
\newcommand{\mfM}{\mathfrak{M}}

\newcommand{\wh}{\widehat}
\newcommand{\whR}{\widehat{\mathcal{R}}}
\newcommand{\whRi}{\widehat{\mathcal{R}}_{\infty}}

\newtheorem{theorem}{Theorem}[section]
\newtheorem{corollary}[theorem]{Corollary}
\newtheorem{lemma}[theorem]{Lemma}

\newtheorem{proposition}[theorem]{Proposition}

\theoremstyle{definition}
\newtheorem{definition}[theorem]{Definition}
\newtheorem{remark}[theorem]{Remark}

\newtheorem{example}[theorem]{Example}

\newcommand{\e}{\varepsilon}
\newcommand{\E}{\mathcal{E}}
\newcommand{\cO}{\mathcal{O}}

\title{Cartier duality for $(\varphi, \hat{G})$-modules}

\author{Yoshiyasu Ozeki\footnote{
Graduate School of Mathematics, 
Kyushu University, Fukuoka 819-0395, Japan.
\endgraf
e-mail: {\tt y-ozeki@math.kyushu-u.ac.jp}}}

\date{}
\begin{document}
\maketitle

\begin{abstract}
In this paper, 
we prove the Cartier duality  
for $(\vphi, \hat{G})$-modules
which are defined by Tong Liu to classify semistable Galois representations.
\end{abstract}

\section{Introduction}
Let $k$ be a perfect field of 
characteristic $p\ge 2$,
$W(k)$ its ring of Witt vectors, 
$K_0:=W(k)[1/p]$, $K$ a finite totally 
ramified extension of $K_0$,
$\bar{K}$ a fixed algebraic closure of $K$
and $G:=\mrm{Gal}(\bar K/K)$.
Let $r\ge 0$ be an integer.
In this paper, 
we give the Cartier duality  
for $(\varphi, \hat{G})$-modules.
A $(\varphi, \hat{G})$-module is a Kisin module equipped with 
certain Galois action, which was introduced by Liu in \cite{Li3}
to classify lattices in 
semistable representations of $G$ whose
Hodge-Tate weights are in $[0,r]$.

Breuil defined strongly divisible lattices and 
conjectured that, if $r<p-1$, 
the category of 
strongly divisible lattices of weight $r$
is equivalent to the category $\mrm{Rep}^r_{\mbb{Z}_p}(G)$  of 
$G$-stable $\mbb{Z}_p$-lattices in semistable $p$-adic representations of $G$ 
whose Hodge-Tate weights are in $[0,r]$.
This conjecture was proved by Liu in \cite{Li2} if $p\ge 3$.
To remove the condition $r<p-1$, Liu defined a free $(\varphi, \hat{G})$-module 
of height $r$ in \cite{Li3}
and proved that there exists a functor $\hat{T}$
from the category
of free $(\varphi, \hat{G})$-modules of height $r$
into $\mrm{Rep}^r_{\mbb{Z}_p}(G)$ 
which induces an equivalence of those categories
for any $p\ge 2$ and $r\ge 0$.
The notion of torsion $(\varphi, \hat{G})$-modules 
of height $r$ is given in \cite{CL} and 
there exists a functor $\hat{T}$ 
from the category of torsion 
$(\varphi, \hat{G})$-modules of weight $r$
into torsion $\mbb{Z}_p$-representations of $G$
as in the case of free $(\varphi, \hat{G})$-modules.

The Cartier duality for
Breuil modules and Kisin modules 
have been studied by Caruso \cite{Ca2} and Liu \cite{Li1}, respectively.
Let $\hat{\mfrak{M}}$ be a torsion (resp.\ free) 
$(\varphi, \hat{G})$-module of height $r$.
We define {\it the dual 
$\hat{\mfrak{M}}^{\vee}$} of 
$\hat{\mfrak{M}}$ in Section 3, which depends on $r$.
Our main result is

\begin{theorem}
\label{Dual}
Let $p\ge 2$ be a prime number and $r\ge 0$ an integer.
Let $\hat{\mfrak{M}}$ be a torsion (resp.\ free) 
$(\varphi, \hat{G})$-module of height $r$ and
$\hat{\mfrak{M}}^{\vee}$ its dual.

\noindent
$(1)$
The dual $\hat{\mfrak{M}}^{\vee}$ 
is a torsion (resp.\ free) 
 $(\varphi, \hat{G})$-module of height $r$.

\noindent
$(2)$ The assignment 
$\hat{\mfrak{M}}\mapsto \hat{\mfrak{M}}^{\vee}$
is an anti-equivalence on the category 
of torsion (resp.\ free)
$(\varphi, \hat{G})$-modules 
and a natural map 
$\hat{\mfrak{M}}\to (\hat{\mfrak{M}}^{\vee})^{\vee}$
is an isomorphism.

\noindent
$(3)$  
$\hat{T}(\hat{\mfrak{M}}^{\vee})\simeq 
\hat{T}^{\vee}(\hat{\mfrak{M}})(r)$ as 
$\mbb{Z}_p$-representations of $G$.
\end{theorem}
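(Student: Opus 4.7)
\emph{Proof plan.}
The strategy is to combine the existing Cartier duality for Kisin modules due to Liu \cite{Li1} with a direct analysis of the additional $\hat{G}$-structure on the dual. Write $\hat{\mfrak{M}} = (\mfrak{M}, \vphi, \hat{G})$ and take the underlying $\vphi$-module of $\hat{\mfrak{M}}^{\vee}$ to be the Kisin-module dual $\mfrak{M}^{\vee}$ defined in Section~3 (whose height-$r$ Kisin structure is supplied by \cite{Li1}), endowed with the $\hat{G}$-action induced from duality inside the period ring used to define $\hat{\mfrak{M}}$.

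For part (1), the Kisin-module part is handled by \cite{Li1}, so one must verify the axioms of a $(\vphi, \hat{G})$-module for the dual action: compatibility of the $\hat{G}$-action with $\vphi$, the fact that the relevant Galois subgroup still acts through the underlying Kisin-module structure, and triviality of the induced action modulo the appropriate augmentation ideal. Each of these should follow from transport of structure along the duality pairing. For part (2), the anti-equivalence and the biduality isomorphism for Kisin modules are in \cite{Li1}; it then remains to check that the biduality map is $\hat{G}$-equivariant and that the dual of a $(\vphi, \hat{G})$-morphism is again a morphism in the dual category, both formal consequences of functoriality of the defining pairings.

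The main content is part (3), where the goal is to construct a perfect $G$-equivariant pairing
\[
\hat{T}(\hat{\mfrak{M}}) \otimes_{\mbb{Z}_p} \hat{T}(\hat{\mfrak{M}}^{\vee}) \longrightarrow \mbb{Z}_p(r).
\]
I would realise $\hat{T}(\hat{\mfrak{M}})$ as the space of $G$-equivariant, $\vphi$-compatible homomorphisms from $\hat{\mfrak{M}}$ into the defining period ring, and similarly for the dual, then compose Liu's Kisin-level pairing with a duality inside the period ring. The Tate twist by $r$ emerges from the distinguished element (a suitable power of the Liu element $\mfrak{t}$) used to form $\mfrak{M}^{\vee}$: after unwinding, the natural target is the cyclic $\mbb{Z}_p$-submodule generated by this element, on which $G$ acts via the $r$-th power of the cyclotomic character. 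Perfectness of the underlying $\mbb{Z}_p$-module pairing follows from \cite{Li1}, and $G$-equivariance comes from the $\hat{G}$-compatibility established in (1).

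I expect the main obstacle to be the triviality-modulo-the-augmentation-ideal axiom in (1), which involves a non-obvious interaction of the duality pairing with that reduction; in the torsion case one must additionally control the lack of flatness when reducing modulo powers of $p$. The same interaction is ultimately what forces the exponent $r$ in the Tate twist of (3), so most of the delicate bookkeeping is likely concentrated in these points.
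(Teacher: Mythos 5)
Your plan is broadly aligned with the paper's approach for parts (1) and (2): Liu's duality for Kisin modules supplies the underlying $\vphi$-module, and the remaining work is to check the $\hat{G}$-axioms of Definition~\ref{Liu} for the induced action and the $\hat{G}$-equivariance of biduality. Two caveats here. First, the flatness worry you raise for torsion modules is real, and the paper handles it by a d\'evissage (Lemma~\ref{lemma}) reducing to $p\mfM=0$. Second, you single out axiom (5) (triviality mod $I_+$) as the delicate point, but in the paper the verification of axiom (3) --- that $\hat{G}$ commutes with $\vphi^{\vee}$ on $\wh{\mcal{R}}\otimes_{\vphi,\mfS}\mfM^{\vee}$ --- is equally non-trivial: it requires the cancellation property that $\vphi(E(u))a=0$ in $\whRi$ forces $a=0$ (Lemma~\ref{lemm}), which lets one test the commutation relation only on the image of $\vphi_{\hat{\mfM}}$. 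Your proposal does not flag this.

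For part (3) there is a genuine gap, and also a misstatement. You describe $\hat{T}(\hat{\mfM})$ as ``the space of $G$-equivariant, $\vphi$-compatible homomorphisms'' into the period ring; this is not the definition. It is $\mrm{Hom}_{\wh{\mcal{R}},\vphi}(\hat{\mfM},W(R))$ (or $W(R)_\infty$ in the torsion case) with $G$ acting on the hom-set by conjugation, $(\sigma.f)(x)=\sigma(f(\sigma^{-1}x))$. If you literally take $G$-equivariant maps you get the $G$-invariants, which kills the representation. More substantively, your plan says to ``compose Liu's Kisin-level pairing with a duality inside the period ring,'' but this glosses over the central technical step in the paper: the introduction of a covariant functor $\hat{T}_{\ast}(\hat{\mfM})=(W(\mrm{Fr}R)\otimes_{\wh{\mcal{R}}}\hat{\mfM})^{\vphi=1}$ together with (i) the base-change isomorphism $W(\mrm{Fr}R)\otimes_{\mbb{Z}_p}\hat{T}_{\ast}(\hat{\mfM})\simeq W(\mrm{Fr}R)\otimes_{\wh{\mcal{R}}}\hat{\mfM}$ (Proposition~\ref{prop1}, via Fontaine's comparison (\ref{Fon1}) for the associated \'etale $\vphi$-module), and (ii) the identification $\hat{T}^{\vee}(\hat{\mfM})\simeq\hat{T}_{\ast}(\hat{\mfM})$ (Corollary~\ref{cov}), which requires showing that any $\vphi$-compatible $\wh{\mcal{R}}$-linear map into $W(\mrm{Fr}R)_\infty$ actually lands in $W(R)_\infty$, using a result of Fontaine. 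Only after this reduction does the question become a diagram chase of perfect $\vphi$-invariant pairings. Without step (ii) there is no bridge between the contravariant $\hat{T}$ you want to pair and the $\vphi$-invariants where Liu's perfectness argument lives, so the pairing you sketch cannot be closed. If you repair the definition of $\hat{T}$ and insert the covariant functor and the $\hat{T}^{\vee}\simeq\hat{T}_{\ast}$ step, your outline matches the paper's route.
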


\noindent
Here $\hat{T}^{\vee}(\hat{\mfrak{M}})$ is 
the dual representation of  
$\hat{T}(\hat{\mfrak{M}})$
and the symbol ``$(r)$'' in the assertion $(3)$ 
is for the $r$-th Tate twist. 

Now let $p\ge 3$ and $r<p-1$.
Thanks to Liu's theory of \cite{Li2},
we can construct a natural functor  
$\mcal{M}_{\whR}$ from the category of 
free $(\vphi,\hat{G})$-modules of height $r$
into the category of 
strongly divisible lattices of weight $r$,
which gives an equivalence of those categories.
By using this functor, 
we obtain the following comparison result 
between the duality of $(\vphi, \hat{G})$-modules and 
that of Breuil modules.
\begin{corollary}
For a  free 
$(\varphi, \hat{G})$-module  $\hat{\mfrak{M}}$,
there exists a canonical isomorphism 
\[
\mcal{M}_{\whR}(\hat{\mfM}^{\vee})\simeq
\mcal{M}_{\whR}(\hat{\mfM})^{\vee}.
\]
Here  $\mcal{M}_{\whR}(\hat{\mfM})^{\vee}$ is the Cartier dual of 
the strongly divisible lattice $\mcal{M}_{\whR}(\hat{\mfM})$
(cf.\ \cite{Ca2}, Chapter V).
\end{corollary}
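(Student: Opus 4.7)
The plan is to compare both sides at the level of the associated Galois representations and then invoke full faithfulness of the passage from strongly divisible lattices to $\mrm{Rep}^r_{\mbb{Z}_p}(G)$. Under the standing hypotheses $p\ge 3$ and $r<p-1$, Liu's result in \cite{Li2} shows that the functor $T_{\mrm{st}}$ from strongly divisible lattices of weight $r$ to $\mrm{Rep}^r_{\mbb{Z}_p}(G)$ is an equivalence of categories; hence any canonical isomorphism of Galois representations arising functorially lifts uniquely to a canonical isomorphism of strongly divisible lattices.

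The first step is to record the compatibility of $\mcal{M}_{\whR}$ with the Galois-representation functors: the construction of $\mcal{M}_{\whR}$ via \cite{Li2} provides a natural isomorphism $T_{\mrm{st}}\circ\mcal{M}_{\whR}\simeq \hat{T}$ on the category of free $(\vphi,\hat{G})$-modules of height $r$. Granting this, I compute both sides. For the left-hand side, Theorem \ref{Dual}(3) gives
\[
T_{\mrm{st}}\bigl(\mcal{M}_{\whR}(\hat{\mfM}^{\vee})\bigr)\simeq \hat{T}(\hat{\mfM}^{\vee})\simeq \hat{T}^{\vee}(\hat{\mfM})(r).
\]
For the right-hand side, Caruso's Cartier duality for strongly divisible lattices \cite{Ca2} yields
\[
T_{\mrm{st}}\bigl(\mcal{M}_{\whR}(\hat{\mfM})^{\vee}\bigr)\simeq T_{\mrm{st}}\bigl(\mcal{M}_{\whR}(\hat{\mfM})\bigr)^{\vee}(r)\simeq \hat{T}^{\vee}(\hat{\mfM})(r).
\]
Composing these natural isomorphisms and using that $T_{\mrm{st}}$ is fully faithful, I obtain the desired canonical isomorphism $\mcal{M}_{\whR}(\hat{\mfM}^{\vee})\simeq \mcal{M}_{\whR}(\hat{\mfM})^{\vee}$.

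The main obstacle I anticipate is verifying that the resulting isomorphism is truly \emph{canonical} in the sense required by the statement. This amounts to tracking the normalizations of the two duality pairings — the one defining $\hat{\mfM}^{\vee}$ in Section 3 and Caruso's pairing for strongly divisible lattices — through the comparison functor $\mcal{M}_{\whR}$, and checking that the two Tate-twist factors match on the nose rather than only up to a unit. Once this compatibility is pinned down, the corollary is a formal consequence of Theorem \ref{Dual}(3) and the equivalences of categories.
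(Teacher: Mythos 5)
Your proposal is correct and follows essentially the same route as the paper: the paper's own argument is precisely the one-sentence observation that the corollary follows from the Galois compatibility of the duality (Theorem \ref{Gal}), together with the commutativity of the square relating $\hat{T}$, $T_{\mrm{st}}$, and $\mcal{M}_{\whR}$ and the fact that those functors are equivalences. Your write-up simply makes explicit the two computations $T_{\mrm{st}}(\mcal{M}_{\whR}(\hat{\mfM}^{\vee}))\simeq \hat T^{\vee}(\hat{\mfM})(r)\simeq T_{\mrm{st}}(\mcal{M}_{\whR}(\hat{\mfM})^{\vee})$ and then invokes full faithfulness of $T_{\mrm{st}}$, which is exactly what the paper leaves implicit; the caveat you raise about matching Tate-twist normalizations is real but is already settled by the form of Theorem~\ref{Gal} and Caruso's normalization, so no further work is needed.
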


\section{Finite $\mbb{Z}_p$-representations}

\subsection{Kisin modules}
Let $k$ be a perfect field of 
characteristic $p\ge 2$,
$W(k)$ its ring of Witt vectors, 
$K_0:=W(k)[1/p]$, $K$ a finite totally 
ramified extension of $K_0$,
$\bar{K}$ a fixed algebraic closure of $K$
and $G:=\mrm{Gal}(\bar K/K)$.
Throughout this paper,
we fix a uniformizer $\pi\in K$ 
and denote by $E(u)$ its 
Eisenstein polynomial over $K_0$.
Let $\mfS:=W(k)[\![u]\!]$ equipped 
with a Frobenius endomorphism
$\varphi$ via $u\mapsto u^p$ and 
the natural Frobenius on $W(k)$. 
A {\it $\vphi$-module} ({\it over $\mfS$}) 
is a $\mfS$-module 
$\mfM$ equipped with a $\vphi$-semilinear map 
$\vphi\colon \mfM\to \mfM$.
A $\vphi$-module is called a {\it Kisin module}.
A morphism between two $\vphi$-modules 
$(\mfM_1,\vphi_1)$ and $(\mfM_2,\vphi_2)$
is a $\mfS$-linear morphism 
$\mfM_1\to \mfM_2$ compatible 
with Frobenii $\vphi_1$ and $\vphi_2$.
Denote by $\mrm{Mod}^r_{/\mfS}$
the category of $\vphi$-modules 
{\it of finite $E(u)$-height $r$} ({\it or, of height $r$}) 
in the sense that $\mfM$ is of finite type 
over $\mfS$ and the cokernel of 
$\vphi^{\ast}$ is killed by $E(u)^r$,
where $\vphi^{\ast}$ is the $\mfS$-linearization 
$1\otimes \vphi\colon \mfS\otimes_{\vphi,\mfS}\mfM\to \mfM$
of $\vphi$.
Let $\mrm{Mod}^{r,\mrm{tor}}_{/\mfS}$
be the full subcategory of $\mrm{Mod}^r_{/\mfS}$
consisting of finite $\mfS$-modules $\mfM$
which satisfy the following:

\begin{itemize}
\item $\mfM$ is killed by some power of $p$,
\item $\mfM$ has a two term resolution by finite free
$\mfS$-modules, that is, there exists an exact sequence 
\[
0\to \mf{N}_1\to \mf{N}_2\to \mfM\to 0 
\]
of $\mfS$-modules where 
$\mf{N}_1$ and $\mf{N}_2$ 
are finite free $\mfS$-modules.
\end{itemize}
Let $\mrm{Mod}^{r,\mrm{fr}}_{/\mfS}$
be the full subcategory of 
$\mrm{Mod}^r_{/\mfS}$
consisting of finite free $\mfS$-modules. 
Let $R:=\plim \cO_{\bar K}/p$ 
where $\cO_{\bar K}$ is 
the integer ring of $\bar K$
and the transition maps are 
given by the $p$-th power map.
By the universal property of 
the ring of Witt vectors $W(R)$ of $R$,
there exists a unique surjective projection
map $\theta\colon W(R)\to \wh{\cO}_{\bar K}$
which lifts the projection $R\to \cO_{\bar K}/p$
onto the first factor in the inverse limit,
where $\wh{\cO}_{\bar K}$ is the 
$p$-adic completion of $\cO_{\bar K}$.
For any integer $n\ge 0$,
let $\pi_n\in \bar K$ be 
a $p^n$-th root of $\pi$ such that 
$\pi^p_{n+1}=\pi_n$ and write 
\underbar{$\pi$} $=(\pi_n)_{n\ge 0}\in R$. 
Let $[$\underbar{$\pi$}$]\in W(R)$ 
be the Teichm\"uller
representative of \underbar{$\pi$}.
We embed the $W(k)$-algebra $W(k)[u]$ into $W(R)$
via the map $u\mapsto [$\underbar{$\pi$}$]$.
This embedding extends to an 
embedding $\mfS\hookrightarrow W(R)$, 
which is compatible with Frobenius endomorphisms.

Denote by $\cO_{\E}$ the $p$-adic completion 
of $\mfS[1/u]$.
Then $\cO_{\E}$ is a discrete valuation ring 
with uniformizer $p$ and residue field $k(\!(u)\!)$.  
Denote by $\E$ the field of fractions of $\cO_{\E}$.
The inclusion $\mfS\hookrightarrow W(R)$
extends to inclusions $\cO_{\E}\hookrightarrow W(\mrm{Fr}R)$
and $\E\hookrightarrow W(\mrm{Fr}R)[1/p]$.
Here $\mrm{Fr}R$ is the field of fractions of $R$.
It is not difficult to see that $\mrm{Fr}R$ is algebraically closed.
We denote by $\E^{\mrm{ur}}$ the maximal unramified 
field extension of $\E$ in $W(\mrm{Fr}R)[1/p]$
and $\cO^{\mrm{ur}}$ its integer ring.
Let $\wh{\E^{\mrm{ur}}}$ be the 
$p$-adic completion of $\E^{\mrm{ur}}$ and 
$\wh{\cO^{\mrm{ur}}}$ its integer ring.
The ring  $\wh{\E^{\mrm{ur}}}$ 
(resp.\ $\wh{\cO^{\mrm{ur}}}$)
is equal to the closure of 
$\E^{\mrm{ur}}$ in $W(\mrm{Fr}R)[1/p]$
(resp. the closure of $\cO^{\mrm{ur}}$ in $W(\mrm{Fr}R)$). 
Put $\mfS^{\mrm{ur}}:=\wh{\cO^{\mrm{ur}}}\cap W(R)$.
We regard all these rings as subrings of $W(\mrm{Fr}R)[1/p]$.
Put $\mfS_{\infty}^{\mrm{ur}}
:=\mfS^{\mrm{ur}}[1/p]/\mfS^{\mrm{ur}}$.

Let $K_{\infty}:=\cup_{n\ge 0}K(\pi_n)$ and 
$G_{\infty}:=\mrm{Gal}(\bar K/K_{\infty})$.
Then $G_{\infty}$ acts on $\mfS^{\mrm{ur}}$ 
and $\E^{\mrm{ur}}$ continuously and fixes the subring $\mfS\subset W(R)$.
We denote by 
$\mrm{Rep}_{\mbb{Z}_p}(G_{\infty})$
the category of 
continuous $\mbb{Z}_p$-linear representations 
of $G_{\infty}$
on finite $\mbb{Z}_p$-modules.
We denote by 
$\mrm{Rep}^{\mrm{tor}}_{\mbb{Z}_p}(G_{\infty})$
(resp.\ $\mrm{Rep}^{\mrm{fr}}_{\mbb{Z}_p}(G_{\infty})$)
the full subcategory of 
$\mrm{Rep}_{\mbb{Z}_p}(G_{\infty})$
consisting of 
$\mbb{Z}_p$-modules killed by some power of $p$
(resp.\ finite free $\mbb{Z}_p$-modules).  

For any $\mfM\in \mrm{Mod}^{r,\mrm{tor}}_{/\mfS}$,
we define a $\mbb{Z}_p[G_{\infty}]$-module  
via
\[
T_{\mfS}(\mfM):=\mrm{Hom}_{\mfS,\vphi}(\mfM,\mfS_{\infty}^{\mrm{ur}}),
\]
where a $G_{\infty}$-action on 
$T_{\mfS}(\mfM)$ is given by 
$(\sigma.g)(x):=\sigma(g(x))$ 
for $\sigma\in G_{\infty}, g\in T_{\mfS}(\mfM), x\in \mfM$.
The representation $T_{\mfS}(\mfM)$ is an object of 
$\mrm{Rep}^{\mrm{tor}}_{\mbb{Z}_p}(G_{\infty})$.

Similarly, for any $\mfM\in \mrm{Mod}^{r,\mrm{fr}}_{/\mfS}$,
we define a $\mbb{Z}_p[G_{\infty}]$-module via
\[
T_{\mfS}(\mfM):=\mrm{Hom}_{\mfS,\vphi}(\mfM,\mfS^{\mrm{ur}}).
\]
The representation $T_{\mfS}(\mfM)$ is an object of 
$\mrm{Rep}^{\mrm{fr}}_{\mbb{Z}_p}(G_{\infty})$
and $\mrm{rank}_{\mbb{Z}_p}T_{\mfS}(\mfM)=\mrm{rank}_{\mfS}\mfM$.

\begin{theorem}[\cite{Ki}]
The functor 
$T_{\mfS}\colon  \mrm{Mod}^{r,\mrm{fr}}_{/\mfS}\to 
\mrm{Rep}^{\mrm{fr}}_{\mbb{Z}_p}(G_{\infty})$
is fully faithful.
\end{theorem}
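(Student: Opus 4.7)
The plan is to factor $T_{\mfS}$ through Fontaine's classical equivalence between \'etale $\varphi$-modules over $\cO_{\E}$ and $\mrm{Rep}^{\mrm{fr}}_{\mbb{Z}_p}(G_{\infty})$, thereby reducing the full faithfulness of $T_{\mfS}$ to the full faithfulness of the base change functor $\mfM\mapsto \cO_{\E}\otimes_{\mfS}\mfM$. For any $\mfM\in \mrm{Mod}^{r,\mrm{fr}}_{/\mfS}$, the base change $M:=\cO_{\E}\otimes_{\mfS}\mfM$ is finite free over $\cO_{\E}$, and the finite $E(u)$-height condition makes $M$ \'etale because $E(u)$ is a unit in $\cO_{\E}$. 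Fontaine's theorem then asserts that the functor $M\mapsto T_{\cO_{\E}}(M):=\mrm{Hom}_{\cO_{\E},\varphi}(M,\wh{\cO^{\mrm{ur}}})$ is an equivalence of categories onto $\mrm{Rep}^{\mrm{fr}}_{\mbb{Z}_p}(G_{\infty})$.

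Next I would identify $T_{\mfS}(\mfM)$ with $T_{\cO_{\E}}(\cO_{\E}\otimes_{\mfS}\mfM)$ in a natural, $G_{\infty}$-equivariant way. Using the definition $\mfS^{\mrm{ur}}=\wh{\cO^{\mrm{ur}}}\cap W(R)$ and the embedding $\mfS\hookrightarrow W(R)$, restriction along $\mfM\hookrightarrow \cO_{\E}\otimes_{\mfS}\mfM$ produces a natural map $T_{\cO_{\E}}(\cO_{\E}\otimes_{\mfS}\mfM)\to T_{\mfS}(\mfM)$. Its bijectivity is a direct verification: injectivity holds because $\cO_{\E}\otimes_{\mfS}\mfM$ is $\cO_{\E}$-generated by the image of $\mfM$, and surjectivity holds because any $\mfS$-linear $\varphi$-equivariant map $\mfM\to \mfS^{\mrm{ur}}$ extends uniquely by $\cO_{\E}$-linearity to a $\varphi$-equivariant map $\cO_{\E}\otimes_{\mfS}\mfM\to \wh{\cO^{\mrm{ur}}}$.

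The real content is then the full faithfulness of the base change functor $\mfM\mapsto \cO_{\E}\otimes_{\mfS}\mfM$. Injectivity on Hom-sets is immediate from the flatness of $\mfS\hookrightarrow \cO_{\E}$ and the finite freeness of the $\mfM_i$. \emph{The hard part, and the main obstacle, is fullness}: given a $\varphi$-equivariant $\cO_{\E}$-linear map $f\colon \cO_{\E}\otimes_{\mfS}\mfM_1\to \cO_{\E}\otimes_{\mfS}\mfM_2$, one must show $f(\mfM_1)\subseteq \mfM_2$. Choose $\mfS$-bases and represent $f$ by a matrix $B$ with entries in $\cO_{\E}$, and the Frobenii by matrices $A_1,A_2$ with entries in $\mfS$; the $\varphi$-compatibility reads $A_2\varphi(B)=BA_1$, while the height hypothesis guarantees that the entries of $A_2^{-1}$ lie in $E(u)^{-r}\mfS$. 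The plan is to rewrite this as $\varphi(B)=A_2^{-1}BA_1$ and exploit the fact that $\varphi(u)=u^p$ multiplies pole orders in $u$ by $p$, while $A_2^{-1}$ contributes only a bounded pole of order at most $er$; running this pole bound inductively through the $p$-adic filtration of $\cO_{\E}$, and using that $\mfS$ is $p$-adically closed in $\cO_{\E}$, forces the entries of $B$ into $\mfS$. This pole-control argument, which uses the finite $E(u)$-height in an essential way, is the technical crux of Kisin's proof.
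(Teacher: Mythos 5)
Your overall decomposition is the right one, and it matches what the paper's one-line proof is citing: factor $T_{\mfS}$ through Fontaine's equivalence between \'etale $\vphi$-modules over $\cO_{\E}$ and $\mrm{Rep}^{\mrm{fr}}_{\mbb{Z}_p}(G_{\infty})$, identify $T_{\mfS}(\mfM)$ with $T(\cO_{\E}\otimes_{\mfS}\mfM)$ (this is exactly [Li1, Cor.\ 2.2.2], quoted in \S 2.2), and thereby reduce everything to the full faithfulness of $\mfM\mapsto\cO_{\E}\otimes_{\mfS}\mfM$, which is Kisin's Proposition (2.1.12). Faithfulness via flatness is fine.

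The gap is in your sketch of fullness. The relation $\vphi(B)=A_2^{-1}BA_1$ with $A_2^{-1}\in E(u)^{-r}M_n(\mfS)$ does \emph{not} force the entries of $B$ into $\mfS$: reducing mod $p$ and comparing pole orders only gives that the pole of $\bar B$ is bounded by $er/(p-1)$, not that it is zero. In fact, fullness of the mod-$p$ base change is genuinely false when $(p-1)\mid er$: take $\mfM_1=k[\![u]\!]$ with $\vphi(e_1)=e_1$ and $\mfM_2=k[\![u]\!]$ with $\vphi(e_2)=u^{er}e_2$ (both of height $\le r$ since $E(u)\equiv u^e\pmod p$). Then $\mrm{Hom}(\mfM_1,\mfM_2)=0$, but over $k(\!(u)\!)$ the map $e_1\mapsto u^{-er/(p-1)}e_2$ is a nonzero $\vphi$-equivariant morphism $M_1\to M_2$. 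So ``running the pole bound inductively through the $p$-adic filtration'' fails at the very first step $n=1$, and $p$-adic closedness of $\mfS$ in $\cO_{\E}$ cannot rescue a statement that is false modulo $p$. The theorem is true precisely because the offending mod-$p$ morphism does not lift to $\cO_{\E}$; Kisin's actual argument works integrally over $\mfS$ (using freeness essentially) and combines the pole bound with a uniqueness result for finite-$E(u)$-height $\mfS$-lattices inside a fixed \'etale $\vphi$-module (applied to the lattice spanned by $\mfM_1\oplus\mfM_2$ and the graph of $f$ inside $M_1\oplus M_2$). The pole control is one ingredient, but the conclusion $f(\mfM_1)\subset\mfM_2$ needs that extra step, which your sketch omits.
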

\begin{proof}
The desired assertion follows from 
Corollary (2.1.4) and Proposition (2.1.12) of \cite{Ki}
and Fontaine's theory (below).
\end{proof}

\subsection{Fontaine's theory}
A finite $\cO_{\E}$-module $M$ 
is called {\it \'etale}
if $M$ is equipped with 
a $\vphi$-semi-linear map 
$\vphi_M\colon M\to M$ such that 
$\vphi^{\ast}_M$ is an isomorphism, 
where $\vphi^{\ast}_M$ is 
the $\cO_{\E}$-linearization
$1\otimes \vphi_M\colon \cO_{\E}
\otimes_{\vphi, \cO_{\E}} M\to M$
of $\vphi_M$. 
We denote by $\mbf{\Phi M}_{\cO_{\E}}$ 
the category of finite \'etale $\cO_{\E}$-modules
with the obvious morphisms.
Note that the extension $K_{\infty}/K$ is a strictly 
APF extension in the sense of \cite{Wi}
and thus $G_{\infty}$ is 
naturally isomorphic to the absolute Galois group of $k(\!(u)\!)$
by the theory of norm fields.
Combining this fact and Fontaine's theory 
in \cite{Fo}, A 1.2.6, we have that
the functor
\[
T_{\ast}\colon \mbf{\Phi M}_{\cO_{\E}}\to 
\mrm{Rep}_{\mbb{Z}_p}(G_{\infty}),\quad M\mapsto 
(\wh{\cO^{\mrm{ur}}}\otimes_{\cO_{\E}} M)^{\vphi=1} 
\]
is an equivalence of Abelian categories
and there exists the following  natural 
$\wh{\cO^{\mrm{ur}}}$-linear 
isomorphism which is compatible with $\vphi$-structures
and $G_{\infty}$-actions:
\begin{equation}
\label{Fon1}
\wh{\cO^{\mrm{ur}}}\otimes_{\mbb{Z}_p}T_{\ast}(M) 
\overset{\simeq}{\longrightarrow}
\wh{\cO^{\mrm{ur}}}\otimes_{\cO_{\E}} M.  
\end{equation}
Furthermore, 
the functor $T_{\ast}$ induces equivalences 
of categories between
the category of finite torsion \'etale $\cO_{\E}$-modules 
and $\mrm{Rep}^{\mrm{tor}}_{\mbb{Z}_p}(G_{\infty})$
(resp.\ the category of finite free \'etale $\cO_{\E}$-modules 
and  $\mrm{Rep}^{\mrm{fr}}_{\mbb{Z}_p}(G_{\infty})$).

The contravariant version of the functor $T_{\ast}$
is useful for integral theory.
For any $M\in \mbf{\Phi M}_{\cO_{\E}}$,
put
\[
T(M):=\mrm{Hom}_{\cO_{\E},\vphi}(M,\E^{\mrm{ur}}/\cO^{\mrm{ur}})\quad 
\mrm{if}\ M\ \mrm{is\ killed\ by\ some\ power\ of}\ p 
\]
and 
\[
T(M):=\mrm{Hom}_{\cO_{\E},\vphi}(M,\wh{\cO^{\mrm{ur}}})\quad 
\mrm{if}\ M\ \mrm{is}\ p\ \mrm{torsion\ free}. 
\]
Then we can check that $T(M)$ is 
the dual representation of $T_{\ast}(M)$.

Let $\mfM\in \mrm{Mod}^{r,\mrm{tor}}_{/\mfS}$ 
(resp.\ $\mfM\in \mrm{Mod}^{r,\mrm{fr}}_{/\mfS}$).
Since $E(u)$ is a unit in $\cO_{\E}$,
we see that $M:=\cO_{\E}\otimes_{\mfS} \mfM$ 
is a finite torsion \'etale $\cO_{\E}$-module
(resp.\ a finite free \'etale $\cO_{\E}$-module).
Here a Frobenius structure on $M$ is given by 
$\vphi_{M}:=\vphi_{\cO_{\E}}\otimes \vphi_{\mfM}$. 

\begin{theorem}[\cite{Li1}, Corollary 2.2.2]
Let
$\mfM\in \mrm{Mod}^{r,\mrm{tor}}_{/\mfS}$ or 
$\mfM\in \mrm{Mod}^{r,\mrm{fr}}_{/\mfS}$.
Then the natural map
\[
T_{\mfS}(\mfM)\to T(\cO_{\E}\otimes_{\mfS} \mfM)
\]
is an isomorphism as $\mbb{Z}_p$-representations 
of $G_{\infty}$.
\end{theorem}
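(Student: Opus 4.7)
The plan is to construct the natural map explicitly and then verify bijectivity, treating the free case first and reducing the torsion case to it via the two-term resolution. Specifically, the natural map sends a $\mfS$-linear $\vphi$-equivariant $f \colon \mfM \to \mfS^{\mrm{ur}}$ (resp.\ $\mfS^{\mrm{ur}}_{\infty}$) to its unique $\cO_{\E}$-linear extension $\cO_{\E}\otimes_{\mfS} \mfM \to \wh{\cO^{\mrm{ur}}}$ (resp.\ $\E^{\mrm{ur}}/\cO^{\mrm{ur}}$), built from the inclusions $\mfS^{\mrm{ur}} \hookrightarrow \wh{\cO^{\mrm{ur}}}$ and $\mfS^{\mrm{ur}}_{\infty} \hookrightarrow \E^{\mrm{ur}}/\cO^{\mrm{ur}}$. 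Since $G_{\infty}$ acts compatibly on both coefficient rings in each pair, this assignment is automatically $G_{\infty}$-equivariant.

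Next I focus on the free case. By adjunction, $\mrm{Hom}_{\cO_{\E},\vphi}(\cO_{\E} \otimes_{\mfS} \mfM, \wh{\cO^{\mrm{ur}}}) \cong \mrm{Hom}_{\mfS, \vphi}(\mfM, \wh{\cO^{\mrm{ur}}})$, so the theorem reduces to showing that any $\vphi$-equivariant $\mfS$-linear $f\colon \mfM \to \wh{\cO^{\mrm{ur}}}$ has image contained in $\mfS^{\mrm{ur}} = \wh{\cO^{\mrm{ur}}} \cap W(R)$. Fixing an $\mfS$-basis $e_1, \dots, e_n$ of $\mfM$ and writing the matrix $A \in M_n(\mfS)$ of $\vphi$, the height-$r$ hypothesis produces $B \in M_n(\mfS)$ with $BA = E(u)^r I$, so the elements $x_j := f(e_j) \in \wh{\cO^{\mrm{ur}}}$ satisfy the integral system $E(u)^r x_j = \sum_i B_{ji}\,\vphi(x_i)$. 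The hard part, which I expect to be the main obstacle, is to leverage this relation together with the bijectivity of $\vphi$ on $W(R)$ (from perfectness of $R$) and the integrality of $E(u)$ in $W(R)$ to force each $x_j$ into $W(R)$; injectivity of the natural map then follows because $\mfM$ embeds into $\cO_{\E} \otimes_{\mfS} \mfM$ on a free module.

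Finally, to handle the torsion case I apply both functors to a two-term free resolution $0 \to \mf{N}_1 \to \mf{N}_2 \to \mfM \to 0$ provided by the definition of $\mrm{Mod}^{r,\mrm{tor}}_{/\mfS}$. Flatness of $\cO_{\E}$ over $\mfS$ preserves this sequence on the \'etale side. Combining the short exact sequences $0 \to \mfS^{\mrm{ur}} \to \mfS^{\mrm{ur}}[1/p] \to \mfS^{\mrm{ur}}_{\infty} \to 0$ and $0 \to \wh{\cO^{\mrm{ur}}} \to \wh{\cO^{\mrm{ur}}}[1/p] \to \E^{\mrm{ur}}/\cO^{\mrm{ur}} \to 0$ with the isomorphism already established for each $\mf{N}_i$, a five-lemma diagram chase delivers the result for $\mfM$, with $G_{\infty}$-equivariance inherited throughout.
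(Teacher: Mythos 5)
The paper itself gives no proof of this theorem; it is cited verbatim from Liu (\cite{Li1}, Corollary~2.2.2), so any proposed proof must be compared with Liu's argument rather than with anything in the present article. Your overall architecture (free case first, then dévissage for the torsion case, with the crux being integrality of the image) is the right shape, but two of the load-bearing steps are left as gaps that do not close the way you suggest.

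The first gap is the one you yourself flag as ``the main obstacle.'' You write the height-$r$ relation $E(u)^r x_j = \sum_i B_{ji}\,\vphi(x_i)$ and hope that bijectivity of $\vphi$ on $W(R)$ together with integrality of $E(u)$ will force $x_j \in W(R)$. This cannot work by itself: $E(u)$ is \emph{not} a unit in $W(R)$ (it generates $\mathrm{Ker}\,\theta$), so the relation only gives $x_j = E(u)^{-r}\sum_i B_{ji}\vphi(x_i)$ inside $W(\mathrm{Fr}\,R)$ and pushes the problem onto $\vphi(x_i)$; iterating produces an infinite product of $\vphi^k(E(u))^{-r}$ and requires a genuine convergence/valuation estimate. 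The actual ingredient is Fontaine's Proposition~B.1.8.3 (a $\vphi$-stable finite-type $\mfS$-submodule of $W(\mathrm{Fr}\,R)$ or $W(\mathrm{Fr}\,R)_\infty$ of bounded $E(u)$-height lies in $\mfS^{\mrm{ur}}$, resp.\ $\mfS^{\mrm{ur}}_\infty$). This is exactly the result the present paper invokes in the proof of Corollary~\ref{cov} ($\mfrak{g}(\mfM)\subset \mfS^{\mrm{ur}}_{\infty}$). Without citing or reproving that lemma, your argument has no content at precisely the point you acknowledge is hard.

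The second gap is in the torsion reduction. You propose to apply the established free-case isomorphism to the resolution $0\to\mf{N}_1\to\mf{N}_2\to\mfM\to 0$ from the definition of $\mrm{Mod}^{r,\mrm{tor}}_{/\mfS}$ and then run the five lemma. But that resolution is only required to be an exact sequence of $\mfS$-\emph{modules}; the $\mf{N}_i$ need not carry a compatible $\vphi$, let alone be Kisin modules of height $r$, so $T_{\mfS}(\mf{N}_i)$ is undefined and the diagram you want does not exist. One must either prove that a $\vphi$-compatible lift of the resolution by free Kisin modules of height $r$ exists (true, but it is a separate lemma of Liu/Caruso--Liu that you would need to cite), or use the other dévissage the paper actually relies on (Proposition~2.3.2 of \cite{Li1}, quoted in Lemma~\ref{lemma}): a torsion Kisin module of height $r$ is a successive extension of $p$-torsion Kisin modules of height $r$, and one then treats the $p$-torsion case and chases exact sequences in that filtration. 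As written, the dévissage step does not go through.
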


\subsection{Liu's theory}
Let $S$ be the $p$-adic completion 
of $W(k)[u,\frac{E(u)^i}{i!}]_{i\ge 0}$ and endow $S$
with the following structures:
\begin{itemize}
\item a continuous $\varphi$-semilinear 
Frobenius $\varphi\colon S\to S$
defined by $\varphi(u):=u^p$.

\item a continuous linear derivation 
      $N\colon S\to S$ defined by $N(u):=-u$.
      
\item a decreasing filtration 
      $(\mrm{Fil}^iS)_{i\ge 0}$ in $S$. Here $\mrm{Fil}^iS$ 
      is the $p$-adic closure of the ideal generated 
      by the divided powers $\gamma_j(E(u))=\frac{E(u)^j}{j!}$ for all $j\ge i$.       

\end{itemize}
Put $S_{K_0}:=S[1/p]=K_0\otimes_{W(k)} S$.
The inclusion $W(k)[u]\hookrightarrow W(R)$ 
via the map $u\mapsto [$\underbar{$\pi$}$]$
induces inclusions 
$\mfS\hookrightarrow S\hookrightarrow A_{\mrm{cris}}$
and $S_{K_0}\hookrightarrow B^+_{\mrm{cris}}$.
We regard all these rings as subrings 
in $B^+_{\mrm{cris}}$.

Fix a choice of primitive $p^i$-root 
of unity $\zeta_{p^i}$ for $i\ge 0$
such that $\zeta^p_{p^{i+1}}=\zeta_{p^i}$.
Put \underbar{$\e$} $:=(\zeta_{p^i})_{i\ge 0}\in R^{\times}$
and $t:=\mrm{log}([$\underbar{$\e$}$])\in A_{\mrm{cris}}$.
Denote by $\nu\colon W(R)\to W(\bar k)$ 
a unique lift of the projection $R\to \bar k$.
Since $\nu(\mrm{Ker}(\theta))$ 
is contained in the set $pW(\bar k)$,
$\nu$ extends to a map 
$\nu\colon A_{\mrm{cris}}\to W(\bar k)$
and $\nu \colon B^+_{\mrm{cris}}\to W(\bar k)[1/p]$.
For any subring $A\subset B^+_{\mrm{cris}}$,
we put 
$I_+A:=\mrm{Ker}(\nu\ \mrm{on}\  B^+_{\mrm{cris}})\cap A$.
For any integer $n\ge 0$,
let $t^{\{n\}}:=t^{r(n)}\gamma_{\tilde{q}(n)}(\frac{t^{p-1}}{p})$ 
where $n=(p-1)\tilde{q}(n)+r(n)$ with $0\le r(n) <p-1$
and $\gamma_i(x)=\frac{x^i}{i!}$ is 
the standard divided power.

We define a subring $\mcal{R}_{K_0}$ of $B^+_{\mrm{cris}}$
as below:
\[
\mcal{R}_{K_0}:=\{\sum^{\infty}_{i=0} f_it^{\{i\}}\mid f_i\in S_{K_0}\
\mrm{and}\ f_i\to 0\ \mrm{as}\ i\to \infty\}.
\]
Put $\wh{\mcal{R}}:=\mcal{R}_{K_0}\cap W(R)$
and $I_+:=I_+\wh{\mcal{R}}$.

For any field  $F$ over $\mbb{Q}_p$, set 
$F_{p^{\infty}}:=\cup^{\infty}_{n\ge 0} F(\zeta_{p^n})$.
Recall $K_{\infty}=\cup_{n\ge 0}K(\pi_n)$ 
and note that 
$K_{\infty, p^{\infty}}=\cup_{n\ge 0} K(\pi_n, \zeta_{p^{\infty}})$
is the Galois closure of 
$K_{\infty}$ over $K$.
Put $H_K:=\mrm{Gal}(K_{\infty,p^{\infty}}/K_{\infty}), 
G_{p^{\infty}}:=\mrm{Gal}(K_{\infty,p^{\infty}}/K_{p^{\infty}})$ and 
$\hat{G}:=\mrm{Gal}(K_{{\infty},p^{\infty}}/K)$.

\begin{proposition}[\cite{Li3}, Lemma 2.2.1]
$(1)$ $\wh{\mcal{R}}$ (resp.\ $\mcal{R}_{K_0}$) 
is a $\vphi$-stable $\mfS$-algebra
as a subring in $W(R)$ (resp.\ $B^+_{\mrm{cris}}$).

\noindent
$(2)$ $\wh{\mcal{R}}$ and $I_+$ 
(resp.\ $\mcal{R}_{K_0}$ and $I_+\mcal{R}_{K_0}$) are $G$-stable.
The $G$-action on $\wh{\mcal{R}}$ and $I_+$ 
(resp.\ $\mcal{R}_{K_0}$ and $I_+\mcal{R}_{K_0}$)
factors through $\hat{G}$.

\noindent
$(3)$ There exist natural isomorphisms 
$\mcal{R}_{K_0}/I_+\mcal{R}_{K_0}\simeq K_0$ and 
$\wh{\mcal{R}}/I_+\simeq S/I_+S\simeq \mfS/I_+\mfS\simeq W(k)$.
\end{proposition}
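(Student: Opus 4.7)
The plan is to verify the three assertions in turn, relying on the explicit description $\mcal{R}_{K_0}=\{\sum_i f_i t^{\{i\}}\}$ together with the behaviour of $\vphi$, $G$, and $\nu$ on the generators $u=[\underline{\pi}]$ and $t=\log[\underline{\e}]$.

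For (1), the embeddings $\mfS\hookrightarrow S\hookrightarrow\mcal{R}_{K_0}$ (as the $t^{\{0\}}$-component) exhibit $\mcal{R}_{K_0}$ as an $\mfS$-algebra, and intersection with $W(R)$ does the same for $\wh{\mcal{R}}$. For $\vphi$-stability I would compute $\vphi(t^{\{n\}})$ using $\vphi(t)=pt$ and the explicit formula for $t^{\{n\}}$; a short $p$-adic valuation check shows $\vphi(t^{\{n\}})\in p^{c_n}\mbb{Z}_p\cdot t^{\{n\}}$ with $c_n\ge 0$, so $\vphi$ preserves $\mcal{R}_{K_0}$, and intersecting with the $\vphi$-stable ring $W(R)$ handles $\wh{\mcal{R}}$.

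For (2), the $G$-action satisfies $g(t)=\chi(g)t$ with $\chi$ the cyclotomic character, hence $g(t^{\{n\}})=\chi(g)^n t^{\{n\}}$. On $u=[\underline{\pi}]$ one has $g(u)=[\underline{\e}]^{c(g)}u$ for a $1$-cocycle $c(g)\in\mbb{Z}_p$; expanding $[\underline{\e}]^{c(g)}=\exp(c(g)t)$ and re-expressing it in the basis $\{t^{\{i\}}\}$ shows $g(u)\in u\cdot\mcal{R}_{K_0}$. This propagates to all of $\mcal{R}_{K_0}$ by $p$-adic continuity, and then to $\wh{\mcal{R}}=\mcal{R}_{K_0}\cap W(R)$ since $W(R)$ is $G$-stable. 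Because $c(g)$ depends only on $(g(\pi_n)/\pi_n)_n\in\mu_{p^\infty}$ and $\chi(g)$ depends only on $g|_{K_{p^\infty}}$, the action factors through $\hat{G}=\mrm{Gal}(K_{\infty,p^\infty}/K)$; the ideal $I_+=\mrm{Ker}(\nu)\cap\wh{\mcal{R}}$ is then $\hat{G}$-stable because $\nu$ is $G$-equivariant modulo its kernel.

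For (3), the key vanishings are $\nu(u)=0$ (as $\pi$ reduces to $0$ in $\bar k$) and $\nu(t)=\log 1=0$, which give $\nu(t^{\{i\}})=0$ for every $i\ge 1$. So on $\mcal{R}_{K_0}$ the map $\nu$ sends $\sum f_i t^{\{i\}}\mapsto\nu(f_0)$, landing in $K_0$; surjectivity is immediate from $K_0\subset\mcal{R}_{K_0}$, and identifying the kernel with $I_+\mcal{R}_{K_0}$ reduces to noting $t^{\{i\}}\in I_+$ for $i\ge 1$ and $u\in I_+\cap\mfS$. The cases of $\wh{\mcal{R}}/I_+$, $S/I_+S$ and $\mfS/I_+\mfS$ proceed identically, with image $W(k)$ rather than $K_0$; for $\mfS=W(k)[\![u]\!]$ the statement reduces to the manifest equality $\mfS/u\mfS=W(k)$. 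The most delicate step will be the expansion of $g(u)$ in (2): one must verify not only formal convergence of $\exp(c(g)t)$ in $B^+_{\mrm{cris}}$, but also that the resulting $S_{K_0}$-coefficients in the basis $\{t^{\{i\}}\}$ satisfy the admissibility condition $f_i\to 0$ defining $\mcal{R}_{K_0}$; once this is in hand, (1) and (3) become essentially formal manipulations with $\nu$ and divided powers.
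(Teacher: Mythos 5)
The paper does not prove this proposition—it is cited directly from Liu (\cite{Li3}, Lemma 2.2.1) with no argument given—so there is no in-paper proof to compare against, and your attempt should be judged as a reconstruction of Liu's.

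Parts (1) and (3) are essentially right. For (1), $\vphi(t)=pt$ and a valuation estimate on $\vphi(t^{\{n\}})$ do the job (you should actually carry out that estimate, since the inequality $c_n\ge 0$ is the whole content), and intersecting with the $\vphi$- and $G$-stable ring $W(R)$ handles $\wh{\mcal{R}}$. For (3), the observations $\nu(u)=0$, $\nu(t)=0$, hence $\nu(t^{\{i\}})=0$ for $i\ge 1$, together with the tautological identification $I_+\mcal{R}_{K_0}=\mrm{Ker}(\nu)\cap\mcal{R}_{K_0}$, reduce the claim to the surjectivity of $\nu$ onto $K_0$ (resp.\ $W(k)$), which is clear.

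The genuine gap is in (2). You compute $g(t)=\chi(g)t$ and $g(u)=\exp(c(g)t)\,u$, then assert that stability ``propagates to all of $\mcal{R}_{K_0}$ by $p$-adic continuity.'' But $\mcal{R}_{K_0}$ is not the $p$-adic closure of the $\mbb{Z}_p$-algebra generated by $u$ and the $t^{\{i\}}$: its coefficient ring $S_{K_0}$ contains the divided powers $\gamma_j(E(u))$, and $g\bigl(\gamma_j(E(u))\bigr)=\gamma_j\bigl(E(g(u))\bigr)$ is not covered by knowing $g(u)$ alone. One must show that $E(g(u))$ lies in a divided-power ideal inside $\mcal{R}_{K_0}$ (not just in $\mrm{Fil}^1A_{\mrm{cris}}$) and that the resulting expansion of $\gamma_j(E(g(u)))$ in the $t^{\{i\}}$-basis has $S_{K_0}$-coefficients tending to $0$. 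This is precisely the step Liu has to work for, typically via the Taylor-type expansion $\sigma(x)=\sum_i\gamma_i(-\log[\underline{\e}(\sigma)])\otimes N^i(x)$ and an explicit convergence estimate; your argument as written silently assumes it. Separately, you flag but do not discharge the admissibility of the expansion of $\exp(c(g)t)$ itself in the $\{t^{\{i\}}\}$-basis. Filling these two analytic estimates is the real content of (2); the rest of your outline (that $c(g)$ and $\chi(g)$ factor through $\hat{G}$, that $\nu$ is $G$-equivariant so $I_+$ inherits stability) is correct.
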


For any Kisin module $(\mfM,\vphi_{\mfM})$ 
of height $r$,
we put $\hat{\mfM}:=\wh{\mcal{R}}\otimes_{\vphi, \mfS} \mfM$
and equip $\hat{\mfM}$ with 
a Frobenius $\vphi_{\hat{\mfM}}$ by
$\vphi_{\hat{\mfM}}:=
\phi_{\wh{\mcal{R}}}\otimes \vphi_{\hat{\mfM}}$.
It is known that a natural map
\[
\mfM\rightarrow \wh{\mcal{R}}\otimes_{\vphi, \mfS} \mfM=\hat{\mfM}
\]
is an injection (\cite{CL}, Section 3.1).
By this injection, 
we regard $\mfM$ as a $\vphi(\mfS)$-stable 
submodule in $\hat{\mfM}$.

\begin{definition}
\label{Liu}
A {\it $(\vphi, \hat{G})$-module} $(${\it of height} $r$$)$ 
is a triple $\hat{\mfM}=(\mfM, \vphi_{\mfM}, \hat{G})$ where
\begin{enumerate}
\item[(1)] $(\mfM, \vphi_{\mfM})$ is a 
           Kisin module (of height $r$), 

\vspace{-2mm}
           
\item[(2)] $\hat{G}$ is an $\wh{\mcal{R}}$-semi-linear
           $\hat{G}$-action on 
           $\hat{\mfM}:=\wh{\mcal{R}}\otimes_{\vphi, \mfS} \mfM$,
           
\vspace{-2mm}           
           
\item[(3)] the $\hat{G}$-action commutes with $\vphi_{\hat{\mfM}}$,

\vspace{-2mm}

\item[(4)] $\mfM\subset \hat{\mfM}^{H_K}$,

\vspace{-2mm}

\item[(5)] $\hat{G}$ acts on the 
           $W(k)$-module $\hat{\mfM}/I_+\hat{\mfM}$ trivially.           
\end{enumerate}

If $\mfM$ is a torsion 
(resp.\ free) Kisin module of (height $r$),
we call $\hat{\mfM}$ a 
{\it torsion} 
(resp.\ {\it free}) {\it $(\vphi, \hat{G})$-module} 
({\it of} height $r$).
If $\hat{\mfM}=(\mfM,\vphi_{\mfM},\hat{G})$ 
is a $(\vphi, \hat{G})$-module,
we often abuse of notations by 
denoting $\hat{\mfM}$ the underlying 
module $\wh{\mcal{R}}\otimes_{\vphi, \mfS} \mfM$.
\end{definition}

A morphism 
$f\colon (\mfM, \vphi, \hat{G})\to (\mfM', \vphi', \hat{G})$  
between two $(\vphi, \hat{G})$-modules is a morphism 
$f\colon (\mfM,\vphi)\to (\mfM',\vphi')$ of Kisin-modules 
such that 
$\wh{\mcal{R}}\otimes f\colon \hat{\mfM}\to \hat{\mfM}'$
is a $\hat{G}$-equivalent.
We denote by $\mrm{Mod}^{r,\hat{G}}_{\mfS}$ 
(resp.\  $\mrm{Mod}^{r,\hat{G},\mrm{tor}}_{\mfS}$, 
resp.\ $\mrm{Mod}^{r,\hat{G},\mrm{fr}}_{\mfS}$) the 
category of $(\vphi, \hat{G})$-modules (resp.\ 
torsion $(\vphi, \hat{G})$-modules, resp.\ 
free $(\vphi, \hat{G})$-modules).
We regard $\hat{\mfM}$ as a $G$-module 
via the projection $G\twoheadrightarrow \hat{G}$. 

For a $(\vphi, \hat{G})$-module $\hat{\mfM}$,
we define a $\mbb{Z}_p[G]$-module as below:
\[
\hat{T}(\hat{\mfM}):=\mrm{Hom}_{\wh{\mcal{R}},\vphi}(\hat{\mfM}, W(R)_{\infty})\quad 
\mrm{if}\ \mfM \ \mrm{is\ killed\ by\ some\ power\ of}\ p
\]
and 
\[
\hat{T}(\hat{\mfM}):=\mrm{Hom}_{\wh{\mcal{R}},\vphi}(\hat{\mfM}, W(R))\quad 
\mrm{if}\ \mfM \ \mrm{is\ free}.
\]
Here, $W(R)_{\infty}:=W(R)[1/p]/W(R)$ and  $G$ 
acts on $\hat{T}(\hat{\mfM})$ by $(\sigma.f)(x):=\sigma(f(\sigma^{-1}(x)))$
for $\sigma\in G,\ f\in \hat{T}(\hat{\mfM}),\ x\in \hat{\mfM}$. 

Let $\hat{\mfM}=(\mfM, \vphi_{\mfM}, \hat{G})$ 
be a $(\vphi,\hat{G})$-module.
There exists a natural map 
\[
\theta\colon T_{\mfS}(\mfM)\to \hat{T}(\hat{\mfM})
\]
defined by 
\[
\theta(f)(a\otimes m):=a\vphi(f(m))\quad \mrm{for}\ 
f\in T_{\mfS}(\mfM),\ a\in \wh{\mcal{R}}, m\in \mfM,
\]
which is a $G_{\infty}$-equivalent.

\begin{theorem}[\cite{Li3},\cite{CL}]
\label{equiv}
Let $\hat{\mfM}=(\mfM, \vphi_{\mfM}, \hat{G})$ 
be a $(\vphi,\hat{G})$-module.

\noindent
$(1)$ The map
$\theta\colon T_{\mfS}(\mfM)\to \hat{T}(\hat{\mfM})$
is an isomorphism of $\mbb{Z}_p[G_{\infty}]$-modules.

\noindent
$(2)$ The functor $\hat{T}$ induces an anti-equivalence 
between the category $\mrm{Mod}^{r,\hat{G},\mrm{fr}}_{\mfS}$ 
of free $(\vphi, \hat{G})$-modules of height $r$
and the category $\mrm{Rep}^{r}_{\mbb{Z}_p}(G)$
of $G$-stable $\mbb{Z}_p$-lattices in semi-stable 
$p$-adic representations of $G$ with Hodge-Tate weights in $[0,r]$.
\end{theorem}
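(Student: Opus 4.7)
The plan is to treat the two assertions separately, using Fontaine's equivalence and Kisin's full faithfulness as inputs.

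For (1), the map $\theta$ is manifestly well-defined and $G_\infty$-equivariant, so the task reduces to constructing an inverse. Given $g\in\hat{T}(\hat{\mfM})$, one sets $f(m):=\vphi^{-1}(g(1\otimes m))$, which makes sense because Frobenius is bijective on $W(R)$ (resp.\ $W(R)_\infty$). The defining relation $1\otimes sm=\vphi(s)\otimes m$ in $\whR\otimes_{\vphi,\mfS}\mfM$, combined with the $\whR$-linearity and $\vphi$-compatibility of $g$, shows that $f$ is $\mfS$-linear and commutes with Frobenius. The delicate point is that $f(m)$ lies in $\mfS^{\mrm{ur}}$ (resp.\ $\mfS_\infty^{\mrm{ur}}$); one extends $f$ by $\cO_{\E}$-linearity to an \'etale $\vphi$-module map out of $\cO_{\E}\otimes_\mfS\mfM$, and Fontaine's equivalence together with the identity $\mfS^{\mrm{ur}}=\wh{\cO^{\mrm{ur}}}\cap W(R)$ forces the image into $\mfS^{\mrm{ur}}$. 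Injectivity of $\theta$ is immediate from injectivity of $\vphi$ on $W(R)$.

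For (2), full faithfulness follows from Kisin's theorem combined with (1): a morphism of $G$-representations restricts to a morphism of $G_\infty$-representations and hence, by Kisin, corresponds to a unique morphism of Kisin modules; extending scalars to $\whR$ and using that $\theta$ intertwines the two $G$-actions verifies $\hat{G}$-equivariance. Essential surjectivity is the substantial content. Given a $G$-stable $\mbb{Z}_p$-lattice $T$ in a semistable representation with Hodge--Tate weights in $[0,r]$, Kisin's main theorem produces a free Kisin module $\mfM$ of height $r$ with $T_\mfS(\mfM)\simeq T|_{G_\infty}$. The $\hat{G}$-action on $\hat{\mfM}=\whR\otimes_{\vphi,\mfS}\mfM$ is transported from the natural $G$-action on $W(R)\otimes_{\mbb{Z}_p}T^\vee$ via the comparison isomorphism furnished by (1). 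One must verify that this action preserves the $\whR$-submodule $\hat{\mfM}$ and that axioms (3)--(5) of Definition \ref{Liu} hold; axiom (3) is built in, (4) amounts to the $G_\infty$-invariance of $\mfM$ inside $\hat{\mfM}$, and (5) is seen by reducing modulo $I_+$ and using the isomorphism $\whR/I_+\simeq W(k)$.

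The principal technical obstacle is stability of $\hat{\mfM}$ under the transported $\hat{G}$-action, which is where the passage through the Breuil ring $\mcal{R}_{K_0}\subset B^+_{\mrm{cris}}$ enters and the semistability hypothesis is genuinely used; this is the crux of \cite{Li3}. The torsion case of \cite{CL} is then deduced from the free case by a d\'evissage argument, writing a torsion $(\vphi,\hat{G})$-module as a two-term cokernel of free ones and checking compatibility with $\hat{T}$.
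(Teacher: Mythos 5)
The paper does not prove this theorem; it is quoted directly from \cite{Li3} and \cite{CL}, so there is no in-paper argument to compare against. Assessing your sketch on its own: part (1) is essentially correct, and the construction of the inverse via $f(m):=\vphi^{-1}(g(1\otimes m))$ is the right idea, with $\mfS$-linearity and $\vphi$-compatibility following just as you say. The one point to tighten is the containment $f(\mfM)\subset\mfS^{\mrm{ur}}$ (resp.\ $\mfS_{\infty}^{\mrm{ur}}$). Your route through $\cO_{\E}$ works but implicitly uses the equality $\mrm{Hom}_{\cO_{\E},\vphi}(M,W(\mrm{Fr}R))=\mrm{Hom}_{\cO_{\E},\vphi}(M,\wh{\cO^{\mrm{ur}}})$, which itself needs a short argument via (\ref{Fon1}); the more direct route, and the one this paper reproduces in the proof of Corollary \ref{cov}, is to note that $f(\mfM)$ is a finitely generated $\vphi$-stable $\mfS$-submodule of $E(u)$-height $\le r$ inside $W(\mrm{Fr}R)$ (resp.\ $W(\mrm{Fr}R)_{\infty}$) and invoke Proposition B.1.8.3 of \cite{Fo}.

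For (2) there is a genuine gap in the full-faithfulness argument. You appeal to ``$\theta$ intertwines the two $G$-actions'' to certify that the $\whR$-linear extension of a Kisin-module morphism is $\hat G$-equivariant, but $\theta$ is only $G_{\infty}$-equivariant --- its source $T_{\mfS}(\mfM)$ carries no $G$-action at all --- so it cannot by itself detect $\hat G$-equivariance. The actual verification embeds $\hat\mfM$ into $W(\mrm{Fr}R)\otimes_{\whR}\hat\mfM\simeq W(\mrm{Fr}R)\otimes_{\mbb{Z}_p}\hat T_{\ast}(\hat\mfM)$ (compare Proposition \ref{prop1}), where the $G$-action is dictated by the representation, and then uses injectivity of $\hat\mfM\hookrightarrow W(\mrm{Fr}R)\otimes_{\whR}\hat\mfM$ to force the two candidate maps to agree. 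Your treatment of essential surjectivity correctly isolates the crux (stability of $\hat\mfM$ under the transported $G$-action, which is where semistability and the ring $\mcal{R}_{K_0}$ enter), but of course only gestures at it; that is the substantial content of \cite{Li3}, and the d\'evissage to the torsion case is likewise nontrivial in \cite{CL}.
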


\section{Cartier duality}

Throughout this section,
for any $\mbb{Z}_p$-module $M$ and integer $n\ge 0$,
we put $M_n:=\mbb{Z}_p/p^n\mbb{Z}_p\otimes_{\mbb{Z}_p} M$ and 
$M_{\infty}:=\mbb{Q}_p/\mbb{Z}_p\otimes_{\mbb{Z}_p} M$.

\subsection{Duality on Kisin modules}
In this subsection, 
we recall Liu's results on duality theorems 
for Kisin modules
(\cite{Li1}, Section 3).
We start with the following example:

\begin{example}
Let $\mfS^{\vee}:=\mfS\cdot \mfrak{f}^r$ 
be the rank-$1$ free $\mfS$-module 
with $\vphi(\mfrak{f}^r):=c_0^{-r}E(u)^r\cdot \mfrak{f}^r$ where 
$pc_0$ is the constant coefficient of $E(u)$.
We denote by $\vphi^{\vee}$ this Frobenius $\vphi$.
Then $(\mfS^{\vee},\vphi^{\vee})$ is a free Kisin module of height $r$ and   
there exists an isomorphism 
$T_{\mfS}(\mfS^{\vee})\simeq \mbb{Z}_p(r)$
as $\mbb{Z}_p[G_{\infty}]$-modules
(see \cite{Li1}, Example 2.3.5).
Put $\mfS^{\vee}_{\infty}
:=\mbb{Q}_p/\mbb{Z}_p\otimes_{\mbb{Z}_p} \mfS^{\vee}
=\mfS_{\infty}\cdot \mfrak{f}^r$
(resp.\ $\mfS^{\vee}_{n}
:=\mbb{Z}_p/p^n\mbb{Z}_p\otimes_{\mbb{Z}_p} \mfS^{\vee}
=\mfS_{n}\cdot \mfrak{f}^r$ for any integer $n\ge 0$).
The Frobenius $\vphi$ on $\mfS^{\vee}$ induces 
Frobenii $\vphi^{\vee}$ on $\mfS_{\infty}^{\vee}$ 
and $\mfS_n^{\vee}$.

Put $\E^{\vee}:=\E\otimes_{\mfS} \mfS^{\vee}=\E\cdot \mfrak{f}^r$
and equip $\E^{\vee}$ with a Frobenius $\vphi^{\vee}$ 
arising from that of
$\E$ and $\mfS^{\vee}$.
Similarly, we put 
$\cO_{\E}^{\vee}=\cO_{\E}\cdot \mfrak{f}^r,
(\E/\cO_{\E})^{\vee}=\E/\cO_{\E}\cdot \mfrak{f}^r,
\cO_{\E,n}^{\vee}=\cO_{\E,n}\cdot \mfrak{f}^r$
and equip them with Frobenii $\vphi^{\vee}$
which arise from that of $\E^{\vee}$. 
We define 
$\cO^{\mrm{ur},\vee}$ and 
$\cO_{n}^{\mrm{ur},\vee}$, and Frobenii $\vphi^{\vee}$ on them by the analogous way.
\end{example}

Let $\mfM$ be a  Kisin module of height $r$
and denote by $M:=\cO_{\E}\otimes_{\mfS} \mfM$
the corresponding \'etale $\vphi$-module.
Put 
\[
\mfM^{\vee}:=\mrm{Hom}_{\mfS}(\mfM, \mfS_{\infty}),\
M^{\vee}:=\mrm{Hom}_{\cO_{\E},\vphi}(M,\E/\cO_{\E})
\quad \mrm{if}\ \mfM\ \mrm{is\ killed\ by\ some\ power\ of}\ p
\]
and 
\[
\mfM^{\vee}:=\mrm{Hom}_{\mfS}(\mfM, \mfS),\
M^{\vee}:=\mrm{Hom}_{\cO_{\E},\vphi}(M, \cO_{\E})
\quad \mrm{if}\ \mfM\ \mrm{is\ free}.
\]
We then have natural pairings
\[
\langle \cdot , \cdot \rangle \colon \mfM \times \mfM^{\vee}\to
\mfS_{\infty}^{\vee},\
\langle \cdot , \cdot \rangle \colon 
M\times M^{\vee}\to (\E/\cO_{\E})^{\vee}
\quad \mrm{if}\ \mfM\ \mrm{is\ killed\ by\ some\ power\ of}\ p
\]
and 
\[
\langle \cdot , \cdot \rangle \colon \mfM \times 
\mfM^{\vee}\to \mfS^{\vee},\
\langle \cdot , \cdot \rangle \colon 
M\times M^{\vee}\to \cO_{\E}^{\vee}
\quad \mrm{if}\ \mfM\ \mrm{is\ free}.
\]
The Frobenius $\vphi_{\mfM}^{\vee}$ on $\mfM^{\vee}$ 
(resp.\  $\vphi_{M}^{\vee}$ on $M^{\vee}$ )
is defined to be 
\[
\langle \vphi_{\mfM}(x), 
\vphi_{\mfM}^{\vee}(y) \rangle = 
\vphi^{\vee}(\langle x,y \rangle) 
\quad \mrm{for}\ x\in \mfM, y\in \mfM^{\vee}.
\]
\[
(\mrm{resp.}\ 
\langle \vphi_{M}(x), \vphi_{M}^{\vee}(y) \rangle = 
\vphi^{\vee}(\langle x,y \rangle) \quad \mrm{for}\ x\in M, y\in M^{\vee}.)
\]

\begin{theorem}[\cite{Li1}]
Let $\mfM$ be a torsion (resp.\ free) 
Kisin module of height $r$,
$M:=\cO_{\E}\otimes_{\mfS} \mfM$ 
the corresponding \'etale $\vphi$-module
and 
$\langle \cdot , \cdot \rangle$
the paring as above.

\noindent
$(1)$ $(\mfM^{\vee}, \vphi_{\mfM}^{\vee})$
is a torsion (resp.\ free) Kisin module of height $r$.
Similarly, 
$M^{\vee}$ is a torsion (resp.\ free) \'etale $\vphi$-module.

\noindent
$(2)$ 
A natural map $\cO_{\E}\otimes_{\mfS} \mfM^{\vee}\to M^{\vee}$
is an isomorphism and 
$\vphi^{\vee}_{M}=
\vphi_{\cO_{\E}}\otimes \vphi^{\vee}_{\mfM}$.

\noindent
$(3)$ All parings $\langle \cdot , \cdot \rangle$ 
appeared in the above are perfect.
\end{theorem}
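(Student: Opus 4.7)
The plan is to handle the free case first by explicit matrix computation, and then deduce the torsion case from the two-term free resolutions guaranteed by the definition of $\mrm{Mod}^{r,\mrm{tor}}_{/\mfS}$.

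For the free case, I would fix a basis $e_1,\ldots,e_d$ of $\mfM$ and write $\vphi^{\ast}_{\mfM}$ as a matrix $A\in M_d(\mfS)$ in the induced basis of $\mfS\otimes_{\vphi,\mfS}\mfM$. Height $r$ then produces $B\in M_d(\mfS)$ with $AB=BA=E(u)^r I_d$. Unwinding the defining relation
\[
\langle \vphi_{\mfM}(x),\vphi^{\vee}_{\mfM}(y)\rangle=\vphi^{\vee}(\langle x,y\rangle)
\]
forces the matrix of $\vphi^{\vee}_{\mfM}$ in the dual basis $e_1^{\ast},\ldots,e_d^{\ast}$ to be $c_0^{-r}B^{T}$, which lies in $M_d(\mfS)$ and satisfies $(c_0^{-r}B^{T})(c_0^{r}A^{T})=E(u)^r I_d$. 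Hence $\mfM^{\vee}$ is a free Kisin module of height $r$, and the evaluation pairing is perfect by finite freeness. The same computation in $M_d(\cO_{\E})$, where $E(u)$ is a unit, handles $M^{\vee}$, and finite freeness of $\mfM$ gives the natural isomorphism
\[
\cO_{\E}\otimes_{\mfS}\mrm{Hom}_{\mfS}(\mfM,\mfS)\simeq \mrm{Hom}_{\cO_{\E}}(M,\cO_{\E})
\]
together with the Frobenius compatibility $\vphi^{\vee}_{M}=\vphi_{\cO_{\E}}\otimes \vphi^{\vee}_{\mfM}$ of part (2).

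For the torsion case I would fix a two-term resolution $0\to \mf{N}_1\to \mf{N}_2\to \mfM\to 0$ by finite free Kisin modules of height $r$. Applying $\mrm{Hom}_{\mfS}(-,\mfS)$ together with the long exact sequence induced by $0\to \mfS\to \mfS[1/p]\to \mfS_{\infty}\to 0$, and using $\mrm{Hom}_{\mfS}(\mfM,\mfS[1/p])=0$ (since $\mfM$ is $p$-power torsion while $\mfS[1/p]$ is $p$-torsion free), one identifies $\mfM^{\vee}=\mrm{Hom}_{\mfS}(\mfM,\mfS_{\infty})$ with $\mrm{coker}(\mf{N}_2^{\vee}\to \mf{N}_1^{\vee})$, yielding a short exact sequence $0\to \mf{N}_2^{\vee}\to \mf{N}_1^{\vee}\to \mfM^{\vee}\to 0$ in the category of $\vphi$-modules. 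Since $\vphi\colon\mfS\to\mfS$ makes $\mfS$ free of rank $p$ over itself, the functor $\mfS\otimes_{\vphi,\mfS}-$ is exact, and a snake-lemma chase on the vertical Frobenii $\vphi^{\vee,\ast}$ exhibits $\mrm{coker}(\vphi^{\vee,\ast}_{\mfM})$ as a quotient of $\mrm{coker}(\vphi^{\vee,\ast}_{\mf{N}_1})$, hence killed by $E(u)^r$. This proves (1) in the torsion case. Part (2) follows by tensoring the dual resolution with $\cO_{\E}$ and comparing with the analogous two-term resolution of $M^{\vee}$, while perfectness of the pairing $\mfM\times\mfM^{\vee}\to \mfS_{\infty}^{\vee}$ is obtained by a Five Lemma argument: applying the duality construction to the dual resolution computes $(\mfM^{\vee})^{\vee}\cong \mrm{coker}(\mf{N}_1\to \mf{N}_2)=\mfM$, using the free case $(\mf{N}_i^{\vee})^{\vee}\cong \mf{N}_i$ already established.

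The main obstacle will be the Frobenius bookkeeping. One must verify that the pairing actually produces a Frobenius valued in $\mfM^{\vee}$ rather than in its localization (this is where the existence of $B$ in the height-$r$ condition is essential), and that the Frobenius on $\mrm{coker}(\mf{N}_2^{\vee}\to \mf{N}_1^{\vee})$ transported from the resolution coincides with the one defined intrinsically via the pairing on $\mfM$. These are essentially linear-algebraic checks, but they demand careful tracking of the twist factor $c_0^{-r}E(u)^r$ appearing in $\vphi^{\vee}$ on $\mfS^{\vee}$; once they are established, the remainder of the argument reduces to the free case combined with standard homological algebra.
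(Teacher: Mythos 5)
Your free-case argument is correct: fixing a basis, writing $A$ for the matrix of $\vphi^{\ast}_{\mfM}$, height $r$ produces $B\in M_d(\mfS)$ with $AB=BA=E(u)^rI_d$, the defining relation for the pairing forces the matrix of $\vphi^{\vee}_{\mfM}$ to be $c_0^{-r}B^T$, and $(c_0^{-r}B^T)(c_0^{r}A^T)=E(u)^rI_d$ shows $\mfM^{\vee}$ again has height $r$. Perfectness, the \'etale case over $\cO_{\E}$, and part $(2)$ follow as you say.

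The torsion case, however, has a real gap. You start from a two-term resolution $0\to\mf{N}_1\to\mf{N}_2\to\mfM\to 0$ \emph{by finite free Kisin modules of height $r$}. But the definition of $\mrm{Mod}^{r,\mrm{tor}}_{/\mfS}$ only supplies a two-term resolution by finite free $\mfS$-\emph{modules}: the $\mf{N}_i$ carry no Frobenius and satisfy no height bound. Upgrading this to a resolution inside $\mrm{Mod}^{r,\mrm{fr}}_{/\mfS}$ — i.e.\ simultaneously lifting the Frobenius of $\mfM$ to a height-$r$ Frobenius on $\mf{N}_2$, and then on the kernel — is a nontrivial theorem, not a consequence of the definition, and you neither cite nor prove it. Without it the short exact sequence $0\to\mf{N}_2^{\vee}\to\mf{N}_1^{\vee}\to\mfM^{\vee}\to 0$, the snake-lemma bound on $\mrm{coker}\,\vphi^{\vee\ast}_{\mfM^{\vee}}$, and the five-lemma reflexivity argument all lose their foundation.

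Note that this theorem is quoted from \cite{Li1}, where Liu's proof follows a different and more economical dévissage, also used in this paper's proof of Lemma \ref{lemma}: by \cite{Li1}, Proposition 2.3.2 together with Lemma 2.3.1, $\mfM$ admits a filtration $0=\mfM_0\subset\mfM_1\subset\cdots\subset\mfM_n=\mfM$ by objects of $\mrm{Mod}^{r,\mrm{tor}}_{/\mfS}$ with each graded piece $\mfM_i/\mfM_{i-1}$ killed by $p$ and hence finite free over $k[\![u]\!]$. Your matrix computation for the free case applies verbatim to such a $p$-torsion piece over $k[\![u]\!]$, and the general torsion case follows by induction along the filtration, using the exactness of $0\to(\mfM_i/\mfM_{i-1})^{\vee}\to\mfM_i^{\vee}\to\mfM_{i-1}^{\vee}\to 0$ (\cite{Li1}, Corollary 3.1.5) and the stability of height $r$ under extensions (\cite{Li1}, Lemma 2.3.1). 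That route only needs the existence of the $p$-torsion filtration, which is elementary. If you want to keep your free-resolution approach, you must justify the lifting step; as written, the torsion part of your proof does not close.
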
   
\begin{remark}
The assertion (2) of the above theorem says that there exists a natural isomorphism 
$\cO_{\E}\otimes_{\mfS} {\mfM}^{\vee}\simeq (\cO_{\E}\otimes_{\mfS} {\mfM})^{\vee}=M^{\vee}$
which is compatible with $\vphi$-structures.
In fact, the paring 
$\langle \cdot , \cdot \rangle$
for $M$ is equal to the pairing 
which is obtained by tensoring $\cO_{\E}$
to the pairing $\langle \cdot , \cdot \rangle$ for $\mfM$.    
\end{remark}

\subsection{Dual $(\varphi, \hat{G})$-modules}

In this subsection,
we construct dual $(\vphi,\hat{G})$-modules. 
Put
\[
\wh{\mcal{R}}^{\vee}
:=\wh{\mcal{R}}\otimes_{\vphi,\mfS} \mfS^{\vee}
=\wh{\mcal{R}}\otimes_{\vphi,\mfS} (\mfS\cdot \mfrak{f}^r)
=\wh{\mcal{R}}\cdot \mfrak{f}^r,
\]
\[
\wh{\mcal{R}}_n^{\vee}
:=\mbb{Z}_p/p^n\mbb{Z}_p\otimes_{\mbb{Z}_p} \wh{\mcal{R}}^{\vee}
=\wh{\mcal{R}}\otimes_{\vphi,\mfS} \mfS_n^{\vee}
=\wh{\mcal{R}}\otimes_{\vphi,\mfS} (\mfS_n\cdot \mfrak{f}^r)
=\wh{\mcal{R}}_n\cdot  \mfrak{f}^r
\quad \mrm{for\ any\ integer}\ n\ge 0
\]
and
\[
\wh{\mcal{R}}_{\infty}^{\vee}
:=\mbb{Q}_p/\mbb{Z}_p\otimes_{\mbb{Z}_p} \wh{\mcal{R}}^{\vee}
=\wh{\mcal{R}}\otimes_{\vphi,\mfS} \mfS_{\infty}^{\vee}
=\wh{\mcal{R}}\otimes_{\vphi,\mfS} (\mfS_{\infty}\cdot \mfrak{f}^r)
=\wh{\mcal{R}}_{\infty}\cdot  \mfrak{f}^r,
\]
and we equip them with natural Frobenii arising from those of $\wh{\mcal{R}}$ and $\mfS^{\vee}$.
By Theorem \ref{equiv},
we can define a unique $\hat{G}$-action on $\wh{\mcal{R}}^{\vee}$ such that 
$\wh{\mcal{R}}^{\vee}$ has a structure as a $(\vphi,\hat{G})$-module 
of height $r$ and there exists an isomorphism
\begin{equation}
\label{ga}
\hat{T}(\wh{\mcal{R}}^{\vee})\simeq \mbb{Z}_p(r)
\end{equation}
as $\mbb{Z}_p[G]$-modules.
This $\hat{G}$-action on $\wh{\mcal{R}}^{\vee}$ induces 
$\hat{G}$-actions on $\wh{\mcal{R}}_n^{\vee}$
and  $\wh{\mcal{R}}_{\infty}^{\vee}$. 
Then it is not difficult to see that $\wh{\mcal{R}}_n^{\vee}$ has a structure as a torsion 
$(\vphi,\hat{G})$-module of height $r$
and there exists an isomorphism
\begin{equation}
\label{ga2}
\hat{T}(\wh{\mcal{R}}_n^{\vee})\simeq \mbb{Z}_p/p^n\mbb{Z}_p(r)
\end{equation}
as $\mbb{Z}_p[G]$-modules.
We may say that $\wh{\mcal{R}}^{\vee}$ (resp.\ $\wh{\mcal{R}}_n^{\vee}$ ) 
is a dual $(\vphi,\hat{G})$-module
of $\wh{\mcal{R}}$ (resp.\ $\wh{\mcal{R}}_n$ )
since (\ref{ga}) and (\ref{ga2}) hold.

\begin{remark}
If 
$K_{p^{\infty}}\cap K_{\infty}=K$ (which is automatically hold in the case $p>2$), 
then $\hat{G}$-actions on $\wh{\mcal{R}}^{\vee},\wh{\mcal{R}}_n^{\vee}$
and $\wh{\mcal{R}}_{\infty}^{\vee}$  
can be written explicitly as follows
(see Example 3.2.3 of \cite{Li3}):
If $K_{p^{\infty}}\cap K_{\infty}=K$,
we have $\hat{G}=G_{p^{\infty}}\rtimes H_K$ (see Lemma 5.1.2 in \cite{Li2}). 
Fixing a topological generator $\tau\in G_{p^{\infty}}$,
we define $\hat{G}$-actions on the above three modules 
by the relation $\tau(\mfrak{f}^r):=\hat{c}^r\cdot \mfrak{f}^r$.
Here $\hat{c}:=\frac{c}{\tau(c)},\ 
c:=\prod^{\infty}_{n=0}\vphi^n(\frac{\vphi(c_0^{-1}E(u))}{p})$.
Example 3.2.3 of \cite{Li3} says that $c\in A_{\mrm{cris}}^{\times}$ 
and $\hat{c}\in \wh{\mcal{R}}^{\times}$.
It follows from straightforward calculations  that 
$\wh{\mcal{R}}^{\vee}$ and 
$\wh{\mcal{R}}_n^{\vee}$ 
are $(\vphi,\hat{G})$-modules of height $r$.
\end{remark}

\begin{lemma}
\label{lemma}
Let $A$ be a $\mfS$-algebra 
with characteristic coprime to $p$.
Let $\mfM$ be a torsion 
(resp.\ free) Kisin module.  
Then there exist natural isomorphisms:
\[
A\otimes_{\vphi, \mfS} \mfM^{\vee} 
\overset{\simeq}{\longrightarrow} 
\mrm{Hom}_A(A\otimes_{\vphi, \mfS} \mfM, A_{\infty})\quad 
\mrm{if}\ \mfM\ \mrm{is\ killed\ by\ some\ power\ of}\ p,
\]
\[
A\otimes_{\vphi, \mfS} \mfM^{\vee} 
\overset{\simeq}{\longrightarrow} 
\mrm{Hom}_A(A\otimes_{\vphi, \mfS} \mfM, A)\quad 
\mrm{if}\ \mfM\ \mrm{is\ free}.
\]
\end{lemma}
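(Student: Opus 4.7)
My plan is to spell out the natural map $\Phi$ and reduce both cases of the lemma to the case where $\mfM$ is finite free over $\mfS$, which is immediate. Let $A_{\ast}$ stand for $A_{\infty}$ in the torsion case and $A$ in the free case, and similarly for $\mfS_{\ast}$. The composite $\mfS\xrightarrow{\vphi}\mfS\to A$ induces a natural map $\iota\colon\mfS_{\ast}\to A_{\ast}$, and I define
\[
\Phi\colon A\otimes_{\vphi,\mfS}\mfM^{\vee}\longrightarrow \mrm{Hom}_{A}\bigl(A\otimes_{\vphi,\mfS}\mfM,\,A_{\ast}\bigr),\qquad a\otimes f\longmapsto\bigl(b\otimes m\mapsto ab\cdot\iota(f(m))\bigr).
\]
A direct verification shows $\Phi$ is well defined, $A$-linear, and natural in $\mfM$. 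When $\mfM$ is finite free over $\mfS$, a basis identifies $\mfM^{\vee}$ with $\mfS^{d}$ via the dual basis, both sides of $\Phi$ become canonically $A^{d}$, and $\Phi$ is the identity; the same holds for either flavor of $\Phi$ evaluated on any finite free $\mfS$-module.

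For the torsion case, I would invoke the two-term free resolution $0\to\mf{N}_{1}\to\mf{N}_{2}\to\mfM\to 0$ coming from the definition of $\mrm{Mod}^{r,\mrm{tor}}_{/\mfS}$, with $d_{i}:=\mrm{rank}_{\mfS}\mf{N}_{i}$. Viewing $\Phi$ as a natural transformation between the two contravariant functors
\[
F_{1}(N):=A\otimes_{\vphi,\mfS}\mrm{Hom}_{\mfS}(N,\mfS_{\infty}),\qquad F_{2}(N):=\mrm{Hom}_{A}\bigl(A\otimes_{\vphi,\mfS}N,\,A_{\infty}\bigr),
\]
I would apply both to the resolution. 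For $F_{2}$ this is painless: the $\mf{N}_{i}$ being free, $A\otimes_{\vphi,\mfS}-$ sends the resolution to a short exact sequence, and left exactness of $\mrm{Hom}_{A}(-,A_{\infty})$ delivers $0\to F_{2}(\mfM)\to A_{\infty}^{d_{2}}\to A_{\infty}^{d_{1}}$. For $F_{1}$, $\mrm{Hom}_{\mfS}(-,\mfS_{\infty})$ produces $0\to \mfM^{\vee}\to \mf{N}_{2}^{\vee}\to \mf{N}_{1}^{\vee}$; granting that $A\otimes_{\vphi,\mfS}-$ preserves this left exactness, I obtain analogously $0\to F_{1}(\mfM)\to A_{\infty}^{d_{2}}\to A_{\infty}^{d_{1}}$. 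The middle and right terms on both sides are canonically $A_{\infty}^{d_{i}}$ by the free case applied to $\mf{N}_{i}$, and the two arrows $A_{\infty}^{d_{2}}\to A_{\infty}^{d_{1}}$ coincide because each is the $A_{\infty}$-linear dual of the $A$-linear map induced from $\mf{N}_{1}\to\mf{N}_{2}$. Naturality of $\Phi$ then identifies $\Phi_{\mfM}$ with the induced map of kernels, and $\Phi_{\mfM}$ is therefore an isomorphism.

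The main technical point is ensuring that $A\otimes_{\vphi,\mfS}-$ preserves the left-exact sequence $0\to\mfM^{\vee}\to\mf{N}_{2}^{\vee}\to\mf{N}_{1}^{\vee}$, a flatness-type property of $A$ over $\mfS$ along $\vphi$. I would handle it by reducing modulo $p^{n}$ for $n$ with $p^{n}\mfM=0$: writing $\mfS_{n}:=\mfS/p^{n}\mfS$ and $A_{n}:=A/p^{n}A$, one has $\mfM^{\vee}\cong\mrm{Hom}_{\mfS_{n}}(\mfM,\mfS_{n})$ and $A\otimes_{\vphi,\mfS}\mfM\cong A_{n}\otimes_{\vphi,\mfS_{n}}\mfM$, so the assertion reduces to the classical fact that $\mrm{Hom}$ commutes with flat base change along the map $\mfS_{n}\to A_{n}$ for the finitely presented $\mfS_{n}$-module $\mfM$; the flatness of $A_{n}$ over $\mfS_{n}$ is precisely what the hypothesis on $A$ is designed to secure.
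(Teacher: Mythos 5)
Your route diverges from the paper's in the torsion case, and the divergence introduces a real gap. You resolve $\mfM$ by the two‑term free resolution $0\to\mf{N}_1\to\mf{N}_2\to\mfM\to 0$, dualize to $0\to\mfM^{\vee}\to\mf{N}_2^{\vee}\to\mf{N}_1^{\vee}$, and then need $A\otimes_{\vphi,\mfS}-$ to preserve this left‑exactness. You correctly flag this as the crux, but the fix you propose — reduce mod $p^n$ and invoke flat base change for $\mrm{Hom}$ along $\mfS_n\to A_n$ — requires $A_n=A/p^nA$ to be flat over $\mfS_n=\mfS/p^n\mfS$ via $\vphi$. The hypothesis on $A$ (that $p$ is a non‑zero‑divisor, which is what ``characteristic coprime to $p$'' is encoding here) gives only $\mbb{Z}_p$‑flatness of $A$; it says nothing about flatness of $A$ over the two‑dimensional ring $\mfS$ along $\vphi$, and this is genuinely not available for the intended application $A=\whR$. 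Note also that if flat base change held as you claim, your two‑functor setup with $F_1,F_2$ and the resolution would be superfluous: the claim in your last paragraph would already be the whole lemma, so the argument is circular modulo the missing flatness.

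The paper avoids flatness entirely. Instead of a free resolution it takes the filtration $0=\mfM_0\subset\cdots\subset\mfM_n=\mfM$ of \cite{Li1}, Proposition 2.3.2, with each $\mfM_i/\mfM_{i-1}$ finite free over $\mfS/p\mfS=k[\![u]\!]$. The base case $p\mfM=0$ is computed directly from a $k[\![u]\!]$‑basis (using only $A_{\infty}[p]\cong A/pA$, which is exactly what $p$‑torsion‑freeness of $A$ gives). For the inductive step it uses that the $\mfS_\infty$‑dual of a short exact sequence of torsion Kisin modules is again short exact (\cite{Li1}, Corollary 3.1.5), applies only the \emph{right}‑exactness of $A\otimes_{\vphi,\mfS}-$ to the dual sequence and the \emph{left}‑exactness of $\mrm{Hom}_A(A\otimes_{\vphi,\mfS}-,A_\infty)$ to the original sequence, and finishes with a diagram chase in which a right‑exact row maps to a left‑exact row with the two outer verticals isomorphisms. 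No $\mrm{Tor}$‑vanishing is ever needed. If you want to repair your argument you should switch to this filtration (or otherwise supply the missing $\mrm{Tor}_1^{\mfS}$ vanishing along $\vphi$, which is not given).
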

\begin{proof}
If $\mfM$ is free, the statement is clear.
If $p\mfM=0$, then we may regard $\mfM$ as a 
finite free $\mfS/p\mfS$-module 
and thus the statement is clear.
Suppose that $\mfM$ is a 
(general) torsion Kisin module
of height $r$. 
By Proposition 2.3.2 of \cite{Li1},
there exists an extension of 
$\vphi$-modules
\[
0=\mfM_0\subset \mfM_1\subset \cdots \subset \mfM_n=\mfM 
\]
such that, for all $1\le i\le n$,
$\mfM_i/\mfM_{i-1}\in \mrm{Mod}^{r,\mrm{tor}}_{/\mfS}$ and 
$\mfM_i/\mfM_{i-1}$ is a finite free $\mfS/p\mfS=k[\![u]\!]$-module.
Furthermore, we have $\mfM_i\in \mrm{Mod}^{r,\mrm{tor}}_{/\mfS}$
by Lemma 2.3.1 in \cite{Li1}. 
We show that the natural map
\[
A\otimes_{\vphi, \mfS} \mfM_i^{\vee} 
\longrightarrow
\mrm{Hom}_A(A\otimes_{\vphi, \mfS} 
\mfM_i, A_{\infty}),\quad
a\otimes f\mapsto (a\otimes x\mapsto af(x))
\]
where $a\in A, f\in \mfM_i^{\vee}$ and  $x\in \mfM_i$,
is an isomorphism by induction for $i$.
For $i=0$, it is obvious. 
Suppose that the above map is an isomorphism for $i-1$. 
We have an exact sequence of $\mfS$-modules
\begin{equation}
\label{ex1}
0\to \mfM_{i-1}\to \mfM_i\to \mfM_i/\mfM_{i-1}\to 0. 
\end{equation}
By Corollary 3.1.5 of \cite{Li1}, we know that 
the sequence
\[
0\to (\mfM_i/\mfM_{i-1})^{\vee}\to 
\mfM_i^{\vee}\to \mfM_{i-1}^{\vee}\to 0. 
\]
is also an exact sequence of $\mfS$-modules.
Therefore, we have the following 
exact sequence of $A$-modules:
\begin{equation}
\label{ex2}
A\otimes_{\vphi,\mfS}(\mfM_i/\mfM_{i-1})^{\vee}\to 
A\otimes_{\vphi,\mfS}\mfM_i^{\vee}\to 
A\otimes_{\vphi,\mfS}\mfM_{i-1}^{\vee}\to 0. 
\end{equation}
On the other hand, the exact sequence  
(\ref{ex1}) induces an exact sequence of $A$-modules
\begin{equation}
\label{ex3}
0\to 
\mrm{Hom}_{A}(A\otimes_{\vphi, \mfS} \mfM_i/\mfM_{i-1}, A_{\infty})\to 
\mrm{Hom}_{A}(A\otimes_{\vphi, \mfS} \mfM_i, A_{\infty})\to 
\mrm{Hom}_{A}(A\otimes_{\vphi, \mfS} \mfM_{i-1}, A_{\infty}).
\end{equation}
Combining sequences (\ref{ex2}) and (\ref{ex3}), 
we obtain the following commutative 
diagram of A-modules:
\begin{center}
$\displaystyle \xymatrix{
 & A\otimes_{\vphi,\mfS}(\mfM_i/\mfM_{i-1})^{\vee}\ar[r] \ar[d] & 
A\otimes_{\vphi,\mfS}\mfM_i^{\vee}\ar[r] \ar[d]
& A\otimes_{\vphi,\mfS}\mfM_{i-1}^{\vee}\ar[r] \ar[d]& 0 \\
0\ar[r] & \mrm{Hom}_A(A\otimes_{\vphi, \mfS} \mfM_i/\mfM_{i-1}, A_{\infty})\ar[r] & 
\mrm{Hom}_A(A\otimes_{\vphi, \mfS} \mfM_i, A_{\infty})\ar[r]
& \mrm{Hom}_A(A\otimes_{\vphi, \mfS} \mfM_{i-1}, A_{\infty})  &  }$
\end{center}
where the two rows are exact. 
Furthermore, 
first and third columns are isomorphisms
by the induction hypothesis.
By the snake lemma, 
we obtain that the second column is an isomorphism, too.
\end{proof}

Let $\hat{\mfM}=(\mfM,\vphi_{\mfM},\hat{G})$ be a torsion 
(resp.\ free) $(\vphi,\hat{G})$-module of height $r$
and $(\mfM^{\vee},\vphi^{\vee}_{\mfM})$ 
the dual Kisin module of $(\mfM,\vphi_{\mfM})$.
By Lemma \ref{lemma},
we have isomorphisms
\begin{equation}
\label{act1}
\wh{\mcal{R}}\otimes_{\vphi, \mfS} \mfM^{\vee} 
\overset{\simeq}{\longrightarrow} 
\mrm{Hom}_{\wh{\mcal{R}}}(\wh{\mcal{R}}\otimes_{\vphi, \mfS} \mfM, \wh{\mcal{R}}_{\infty}^{\vee})\quad 
\mrm{if}\ \mfM\ \mrm{is\ killed\ by\ some\ power\ of}\ p
\end{equation} 
(resp.\ 
\begin{equation}
\label{act2}
\wh{\mcal{R}}\otimes_{\vphi, \mfS} \mfM^{\vee} 
\overset{\simeq}{\longrightarrow} 
\mrm{Hom}_{\wh{\mcal{R}}}(\wh{\mcal{R}}\otimes_{\vphi, \mfS} \mfM, \wh{\mcal{R}}^{\vee})\quad 
\mrm{if}\ \mfM\ \mrm{is\ free}).
\end{equation}  
We define $\hat{G}$-action on
$\mrm{Hom}_{\wh{\mcal{R}}}(\wh{\mcal{R}}\otimes_{\vphi, \mfS} \mfM, \wh{\mcal{R}}_{\infty}^{\vee})$
(resp.\ 
$\mrm{Hom}_{\wh{\mcal{R}}}(\wh{\mcal{R}}\otimes_{\vphi, \mfS} \mfM, \wh{\mcal{R}}^{\vee})$)
by
\[
(\sigma.f)(x):=\sigma(f(\sigma^{-1}(x)))
\]
for $\sigma\in \hat{G}, x\in \wh{\mcal{R}}\otimes_{\vphi,\mfS}\mfM$ and 
$f\in \mrm{Hom}_{\wh{\mcal{R}}}(\wh{\mcal{R}}\otimes_{\vphi, \mfS} \mfM, \wh{\mcal{R}}_{\infty}^{\vee})$
(resp.\ $f\in \mrm{Hom}_{\wh{\mcal{R}}}(\wh{\mcal{R}}\otimes_{\vphi, \mfS} \mfM, \wh{\mcal{R}}^{\vee}))$
and equip $\hat{G}$-action on $\wh{\mcal{R}}\otimes_{\vphi, \mfS} \mfM^{\vee}$
via an isomorphism (\ref{act1}) (resp.\ (\ref{act2})).

\begin{theorem}
\label{dual}
Let $\hat{\mfM}=(\mfM,\vphi_{\mfM},\hat{G})$ 
be a torsion (resp.\ free) $(\vphi,\hat{G})$-module 
of height $r$
and equip  $\hat{G}$-action on 
$\wh{\mcal{R}}\otimes_{\vphi,\mfS} \mfM^{\vee}$ as the above.
Then the triple 
$\hat{\mfM}^{\vee}:=(\mfM^{\vee}, \vphi^{\vee}_{\mfM}, \hat{G})$ 
is a torsion (resp.\ free) 
$(\vphi,\hat{G})$-module of height $r$.
\end{theorem}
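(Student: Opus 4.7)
The plan is to verify each of the five defining axioms of a $(\vphi, \hat{G})$-module for the triple $\hat{\mfM}^{\vee}=(\mfM^{\vee}, \vphi^{\vee}_{\mfM}, \hat{G})$. Axiom (1), that $(\mfM^{\vee}, \vphi^{\vee}_{\mfM})$ is a torsion (resp.\ free) Kisin module of height $r$, is already furnished by Liu's duality theorem for Kisin modules. Axioms (2) and (3) are essentially formal: the $\hat{G}$-action defined on $\mrm{Hom}_{\wh{\mcal{R}}}(\hat{\mfM}, \wh{\mcal{R}}_{\infty}^{\vee})$ (resp.\ $\mrm{Hom}_{\wh{\mcal{R}}}(\hat{\mfM}, \wh{\mcal{R}}^{\vee})$) by $(\sigma.f)(x)=\sigma(f(\sigma^{-1}(x)))$ is $\wh{\mcal{R}}$-semilinear by a direct calculation, and commutes with the Frobenius inherited from $\vphi_{\hat{\mfM}}$ and $\vphi^{\vee}_{\wh{\mcal{R}}}$ since both of these individually commute with $\hat{G}$ (in $\hat{\mfM}$ by assumption, in $\wh{\mcal{R}}^{\vee}$ because $\wh{\mcal{R}}^{\vee}$ itself is already a $(\vphi, \hat{G})$-module of height $r$).

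For axiom (4), I would trace an element $y\in \mfM^{\vee}$ through the composition $\mfM^{\vee}\hookrightarrow \hat{\mfM}^{\vee}\xrightarrow{\sim} \mrm{Hom}_{\wh{\mcal{R}}}(\hat{\mfM}, \wh{\mcal{R}}_{(\infty)}^{\vee})$; it corresponds to the map $\alpha_y\colon a\otimes m\mapsto a\langle m,y\rangle$ with $\langle m, y\rangle \in \mfS^{\vee}$. For $\sigma\in H_K$, the expression $(\sigma.\alpha_y)(a\otimes m)=\sigma(\sigma^{-1}(a)\langle m, y\rangle)=a\sigma(\langle m, y\rangle)$ uses the fact that $\sigma$ fixes $1\otimes m$ (by axiom (4) applied to $\hat{\mfM}$). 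To conclude $\sigma(\langle m, y\rangle)=\langle m, y\rangle$, I invoke that $\wh{\mcal{R}}^{\vee}$ is itself a $(\vphi,\hat{G})$-module, so $\mfS^{\vee}\subset (\wh{\mcal{R}}^{\vee})^{H_K}$.

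For axiom (5), reducing $\hat{\mfM}^{\vee}\cong \mrm{Hom}_{\wh{\mcal{R}}}(\hat{\mfM}, \wh{\mcal{R}}_{(\infty)}^{\vee})$ modulo $I_+$, for any $\sigma\in \hat{G}$ and $f$ in the Hom module one writes $\sigma^{-1}(x)=x+y$ with $y\in I_+\hat{\mfM}$ (axiom (5) for $\hat{\mfM}$) and $\sigma(z)=z+z'$ with $z'\in I_+\wh{\mcal{R}}^{\vee}$ (axiom (5) for $\wh{\mcal{R}}^{\vee}$), giving $(\sigma.f)(x)-f(x)\in I_+\wh{\mcal{R}}^{\vee}$ for every $x\in \hat{\mfM}$ using that $I_+$ and $\wh{\mcal{R}}^{\vee}$ are $\hat{G}$-stable. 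The remaining step, passing from ``values in $I_+\wh{\mcal{R}}^{\vee}$'' to ``$\sigma.f-f\in I_+\cdot\mrm{Hom}$'' inside $\hat{\mfM}^{\vee}$, is immediate in the free case because $\hat{\mfM}$ is finite free over $\wh{\mcal{R}}$; in the torsion case one dévisses $\mfM$ as in Lemma \ref{lemma} using Proposition 2.3.2 of \cite{Li1} into pieces that are finite free over $\mfS/p\mfS=k(\!(u)\!)$, reducing to the free argument. This last bookkeeping, together with checking that the constructions are insensitive to the dévissage, is the main technical obstacle; the rest of the proof is essentially formal manipulation of the isomorphism in Lemma \ref{lemma} together with the fact that $\wh{\mcal{R}}^{\vee}$ is already known to satisfy the axioms.
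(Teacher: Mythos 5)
Your proposal handles axioms (1), (2) and (4) by essentially the same calculations as the paper, and your treatment of (4) (tracing $y\in\mfM^{\vee}$ through the map $\alpha_y\colon a\otimes m\mapsto a\langle m,y\rangle$ and using $H_K$-invariance of $\mfM$ and of $\mfS^{\vee}\subset\wh{\mcal{R}}^{\vee}$) is correct. But there is a genuine gap in axiom (3), and your route for (5) is unnecessarily hard.

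\textbf{Axiom (3) is not formal.} You assert that $\hat{G}$ commutes with the Frobenius on $\mrm{Hom}_{\wh{\mcal{R}}}(\hat{\mfM},\wh{\mcal{R}}^{\vee}_{(\infty)})$ ``since both of these individually commute with $\hat{G}$.'' This presumes the dual Frobenius is something like ``postcompose with $\vphi_{\wh{\mcal{R}}^{\vee}}$ and precompose with $\vphi_{\hat{\mfM}}^{-1}$,'' but $\vphi_{\hat{\mfM}}$ is not invertible on a Kisin module of height $r>0$. The Frobenius $\vphi^{\vee}$ on $\hat{\mfM}^{\vee}=\wh{\mcal{R}}\otimes_{\vphi,\mfS}\mfM^{\vee}$ is $\vphi_{\wh{\mcal{R}}}\otimes\vphi^{\vee}_{\mfM}$, transported to the $\mrm{Hom}$ side via the isomorphism of Lemma \ref{lemma}; what one actually knows there is only the \emph{implicit} relation $\vphi^{\vee}(\hat{f})\circ\vphi_{\hat{\mfM}}=\vphi^{\vee}\circ\hat{f}$ (the paper's diagram (\ref{com2}), derived from the identity (\ref{aaa})). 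To pass from equality on the image of $\vphi_{\hat{\mfM}}$ to equality on all of $\hat{\mfM}$ one needs the finite-$E(u)$-height hypothesis together with the fact that multiplication by $\vphi(E(u))$ is injective on $\wh{\mcal{R}}_{\infty}$; the latter is precisely the content of Lemma \ref{lemm}, which is proved in the paper expressly for this purpose. Your argument skips this entirely, and it is in fact the crux of the theorem.

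\textbf{Axiom (5).} Your plan --- show $(\sigma.f)(x)-f(x)\in I_+\wh{\mcal{R}}^{\vee}_{(\infty)}$ pointwise, then promote this to $\sigma.f-f\in I_+\hat{\mfM}^{\vee}$ --- does face the obstacle you flag: $\mrm{Hom}_{\wh{\mcal{R}}}(\hat{\mfM},I_+\wh{\mcal{R}}^{\vee}_{(\infty)})$ need not equal $I_+\mrm{Hom}_{\wh{\mcal{R}}}(\hat{\mfM},\wh{\mcal{R}}^{\vee}_{(\infty)})$ when $\mfM$ is torsion. The paper avoids this by reducing modulo $I_+$ \emph{before} anything else: applying Lemma \ref{lemma} with $A=\wh{\mcal{R}}/I_+\simeq W(k)$ gives a $\hat{G}$-equivariant isomorphism $\hat{\mfM}^{\vee}/I_+\hat{\mfM}^{\vee}\simeq\mrm{Hom}_{W(k)}(\hat{\mfM}/I_+\hat{\mfM},\wh{\mcal{R}}^{\vee}_{\infty}/I_+\wh{\mcal{R}}^{\vee}_{\infty})$, and $\hat{G}$ acts trivially on both arguments of the $\mrm{Hom}$ by axiom (5) for $\hat{\mfM}$ and for $\wh{\mcal{R}}^{\vee}$, so triviality of the action is immediate. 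The d\'evissage you propose can be made to work, but it duplicates work already absorbed into Lemma \ref{lemma}; the reduce-then-apply order is cleaner and is what the paper does.
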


\begin{definition}
We call $\hat{\mfM}^{\vee}$ as Theorem \ref{dual} 
the {\it dual $(\vphi,\hat{G})$-module of $\hat{\mfM}$}.
\end{definition}

To prove Theorem \ref{dual}, 
we need the following easy property for $\wh{\mcal{R}}_{\infty}=\wh{\mcal{R}}[1/p]/\wh{\mcal{R}}$. 

\begin{lemma}
\label{lemm}
$(1)$ For any integer $n$, we have 
\[
\whR[1/p]\cap p^nW(\mrm{Fr}R)=\whR\cap p^nW(R)=p^n\whR.
\]
\noindent
$(2)$ 
The following properties for an $a\in \whR[1/p]$ are equivalent:

$(\mrm{i})$ If $x\in \whR[1/p]$ satisfies that $ax=0$ in $\whRi$, then $x=0$ in $\whRi$.

$(\mrm{ii})$ $a\notin p\whR$.

$(\mrm{iii})$ $a\notin pW(R)$. 

$(\mrm{iv})$ $a\notin pW(\mrm{Fr}R)$. 

\end{lemma}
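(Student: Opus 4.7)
The plan is to handle (1) first by establishing that $\whR$ is $p$-saturated in the chain $\whR\subseteq W(R)\subseteq W(\mrm{Fr}R)$, and then to derive (2) as a formal consequence of (1) together with an integral-domain argument modulo $p$.

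For (1), the inclusions $p^n\whR\subseteq\whR\cap p^nW(R)\subseteq\whR[1/p]\cap p^nW(\mrm{Fr}R)$ are immediate. For the reverse, if $x\in\whR[1/p]$ and $x=p^nz$ with $z\in W(\mrm{Fr}R)$, then $z=x/p^n\in\whR[1/p]$, so the whole statement reduces to two saturation identities: $\whR[1/p]\cap W(R)=\whR$ and $W(R)[1/p]\cap W(\mrm{Fr}R)=W(R)$. The first is immediate from the definition $\whR=\mcal{R}_{K_0}\cap W(R)$ together with the observation that $\mcal{R}_{K_0}$ is already a $\mbb{Q}_p$-algebra, so $\whR[1/p]\subseteq\mcal{R}_{K_0}$. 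The second is where I would use that $R$ and $\mrm{Fr}R$ are perfect $\mbb{F}_p$-algebras: Teichm\"uller expansions are unique, so if $z=\sum_{i\geq 0}[z_i]p^i\in W(\mrm{Fr}R)$ has $pz\in W(R)$, matching coefficients with the analogous expansion of $pz$ in $W(R)$ forces each $z_i\in R$; iterating handles higher powers of $p$.

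For (2), the equivalences (ii)$\Leftrightarrow$(iii)$\Leftrightarrow$(iv) are just (1) with $n=1$ read off for an element $a\in\whR[1/p]$. For (i)$\Rightarrow$(ii) I would argue contrapositively: if $a=pb\in p\whR$, then $x:=1/p\in\whR[1/p]$ satisfies $ax=b\in\whR$ while $x\notin\whR$, because $1\notin p\whR$ (indeed, by (1) the ring $\whR/p\whR$ embeds into $W(R)/pW(R)=R$, where $1\neq 0$). For (ii)$\Rightarrow$(i), my plan is induction on the smallest $m\geq 0$ with $p^mx\in\whR$. In the inductive step, setting $y:=p^mx\in\whR$, the hypothesis $ax\in\whR$ yields $ay=p^m(ax)\in p\whR$, so $\bar a\,\bar y=0$ in $\whR/p\whR\hookrightarrow R$. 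Since $\mrm{Fr}R$ is a field, $R$ is a domain, and (ii) gives $\bar a\neq 0$, forcing $\bar y=0$. Thus $y\in p\whR$, so $p^{m-1}x\in\whR$, and the induction closes.

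The only step with any real content is the saturation identity $W(R)[1/p]\cap W(\mrm{Fr}R)=W(R)$, which I expect to be the main (but very mild) obstacle; once the Teichm\"uller-expansion calculation for perfect rings is in hand, everything else in both (1) and (2) is bookkeeping.
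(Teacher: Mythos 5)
Your treatment of (1) is essentially the paper's: both rest on the two saturation facts $\whR[1/p]\cap W(R)=\whR$ (from $\whR=\mcal{R}_{K_0}\cap W(R)$ together with $\whR[1/p]\subset\mcal{R}_{K_0}$) and $W(R)[1/p]\cap W(\mrm{Fr}R)=W(R)$; you spell out the Teichm\"uller-expansion argument for the second, which the paper uses silently. For (2), your equivalence of (ii)--(iv) and your contrapositive for (i)$\Rightarrow$(ii) (take $x=1/p$) also coincide with the paper's. The genuinely different step is (ii)$\Rightarrow$(i): the paper exploits that $W(\mrm{Fr}R)$ is a complete discrete valuation ring, deducing $\frac{1}{a}W(\mrm{Fr}R)\cap W(\mrm{Fr}R)[1/p]\subset W(\mrm{Fr}R)$ from $a\notin pW(\mrm{Fr}R)$ and then intersecting back down to $\whR$ via part (1); you instead work mod $p$ inside $\whR$, using the embedding $\whR/p\whR\hookrightarrow R$ and a descent on the $p$-power needed to bring $x$ into $\whR$.

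There is, however, a gap in that last step. Writing $\bar a\,\bar y=0$ in $\whR/p\whR$ presupposes $a\in\whR$, but the lemma allows arbitrary $a\in\whR[1/p]$. If $a\in\whR[1/p]\setminus\whR$, then $a\notin p\whR$ holds automatically (since $p\whR\subset\whR$), so (ii) is satisfied and (i) must still be proved, yet $\bar a$ is undefined and your induction never starts. The cleanest fix within your framework is to first reduce to $a\in\whR$: choose $k\ge 0$ minimal with $a':=p^k a\in\whR$ (so $a'\notin p\whR$); then $ax\in\whR$ gives $a'x=p^k(ax)\in p^k\whR\subset\whR$, and your mod-$p$ induction applied to $a'$ yields $x\in\whR$. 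The paper sidesteps this entirely by arguing with the $p$-adic valuation on $W(\mrm{Fr}R)[1/p]$, which treats all $a\in\whR[1/p]$ uniformly; your route buys a more elementary argument at the price of this extra normalization.
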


\begin{proof}
(1) The result follows from the relations
\[
\whR[1/p]\cap p^nW(\mrm{Fr}R)=
\whR[1/p]\cap (W(R)[1/p]\cap p^nW(\mrm{Fr}R))=
\whR[1/p]\cap p^nW(R)
\]
and 
\[
p^n\whR\subset \whR[1/p]\cap p^nW(R)\subset \whR_{K_0}\cap p^nW(R)=
p^n(\whR_{K_0}\cap W(R))=p^n\whR.
\]

\noindent
(2) The equivalence of (ii), (iii) and (iv) follows from the assertion (1).
Suppose the condition (iv) holds.
Take any $x\in \whR[1/p]$ such that $ax\in \whR$.
Then we have
\[
\frac{1}{a}\whR\cap \whR[1/p]\subset 
\frac{1}{a}W(\mrm{Fr}R)\cap W(\mrm{Fr}R)[1/p]\subset W(\mrm{Fr}R)
\]
since $a\notin pW(\mrm{Fr}R)$.
Thus we obtain
\[
x\in \frac{1}{a}\whR\cap \whR[1/p]=
\frac{1}{a}\whR\cap \whR[1/p]\cap W(\mrm{Fr}R)\subset 
\whR[1/p]\cap W(\mrm{Fr}R)=\whR,
\]
which implies the assertion (i) (the last equality follows from (1)).
Suppose the condition (ii) does not hold, that is, $a\in p\whR$.
Then $\whR[1/p]\cap \frac{1}{a}\whR\supset \frac{1}{a}\whR\supset 
\frac{1}{p}\whR\supsetneq \whR$ and this implies that (i) does not hold.
\end{proof}

\begin{proof}[Proof of Theorem \ref{dual}]
We only prove the case where $\hat{\mfM}$ is a torsion  $(\vphi,\hat{G})$-module
(the free case can be checked by almost all the same method).

We check the properties $(1)$ to $(5)$ 
of Definition \ref{Liu} for $\hat{\mfM}^{\vee}$.
It is clear that $(1)$ and $(2)$ hold for $\hat{\mfM}^{\vee}$.
Take any $f\in \mfM^{\vee}$.
Regard $\mfM^{\vee}$ 
as a submodule of $\wh{\mcal{R}}\otimes_{\vphi, \mfS} \mfM^{\vee}$.
Then, in $\wh{\mcal{R}}\otimes_{\vphi, \mfS} \mfM^{\vee}$, 
we see that $f$ is equal to the map 
\[
\hat{f}\colon \wh{\mcal{R}}\otimes_{\vphi, \mfS} \mfM\to \wh{\mcal{R}}\cdot \mfrak{f}^r
\]
given by $a\otimes x\mapsto a\vphi(f(x))\cdot \mfrak{f}^r$ 
for $a\in \wh{\mcal{R}}$ and $x\in \mfM$.
Since $\mfM\subset (\wh{\mcal{R}}\otimes_{\vphi,\mfS} \mfM)^{H_K}$, 
we have 
\begin{align*}
(\sigma.\hat{f})(a\otimes x)&=\sigma(\hat{f}(\sigma^{-1}(a\otimes x)))
=\sigma(\hat{f}(\sigma^{-1}(a)(1\otimes x)))
=\sigma((\sigma^{-1}(a)\hat{f}(1\otimes x)))\\
&=a\sigma(\hat{f}(1\otimes x))
=a\sigma(\vphi(f(x))\cdot \mfrak{f}^r)
=a\vphi(f(x))\cdot \mfrak{f}^r
=\hat{f}(a\otimes x).
\end{align*}
for any $a\in \wh{\mcal{R}}, x\in \mfM$ and $\sigma\in H_K$.
This implies $\mfM^{\vee}\subset (\wh{\mcal{R}}\otimes_{\vphi,\mfS} \mfM^{\vee})^{H_K}$
and hence $(4)$ holds for $\hat{\mfM}^{\vee}$.
Check the property $(5)$, that is, the condition that 
$\hat{G}$ acts trivially on $\hat{\mfM}/I_+\hat{\mfM}$.
By Lemma \ref{lemma}, we know that 
there exists the following natural isomorphism:
\[
\wh{\mcal{R}}\otimes_{\vphi, \mfS} \mfM^{\vee}/
I_+(\wh{\mcal{R}}\otimes_{\vphi, \mfS} \mfM^{\vee}) 
\overset{\simeq}{\longrightarrow} 
\mrm{Hom}_{\wh{\mcal{R}}}
(\wh{\mcal{R}}\otimes_{\vphi, \mfS} \mfM/
I_+(\wh{\mcal{R}}\otimes_{\vphi, \mfS} \mfM), 
\wh{\mcal{R}}^{\vee}_{\infty}/I_+\wh{\mcal{R}}^{\vee}_{\infty}),
\]
which is in fact $\hat{G}$-equivalent by the definition of $\hat{G}$-action on 
$\wh{\mcal{R}}\otimes_{\vphi, \mfS} \mfM^{\vee}$.
Since $\hat{G}$ acts on $\wh{\mcal{R}}\otimes_{\vphi, \mfS} \mfM/
I_+(\wh{\mcal{R}}\otimes_{\vphi, \mfS} \mfM)$ and 
$\wh{\mcal{R}}^{\vee}_{\infty}/I_+\wh{\mcal{R}}^{\vee}_{\infty}$
trivially,
we obtain the desired result.

Finally we prove the property $(3)$ for $\hat{\mfM}^{\vee}$.
First we note that, if we take any  $f\in \mfM^{\vee}=\mrm{Hom}_{\mfS}(\mfM, \mfS_{\infty})$
and regard $f$ as a map which has values in $\mfS_{\infty}^{\vee}$, 
then we have 
\begin{equation}
\label{aaa}
\vphi^{\vee}(f)\circ \vphi_{\mfM}=\vphi^{\vee}\circ f\colon \mfM\to \mfS_{\infty}^{\vee}.
\end{equation}
Recall that there exists a natural isomorphism
\[
\wh{\mcal{R}}\otimes_{\vphi, \mfS} \mfM^{\vee} 
\simeq \mrm{Hom}_{\wh{\mcal{R}}}(\wh{\mcal{R}}\otimes_{\vphi, \mfS} \mfM, \wh{\mcal{R}}_{\infty}^{\vee})
\]
by Lemma \ref{lemma}. 
We equip $\mrm{Hom}_{\wh{\mcal{R}}}(\wh{\mcal{R}}\otimes_{\vphi, \mfS} \mfM, \wh{\mcal{R}}_{\infty}^{\vee})$
with a $\vphi$-structure $\vphi^{\vee}$ 
via this isomorphism.
Then it is enough to show that 
$\sigma \vphi^{\vee}=\vphi^{\vee}\sigma$ on 
$\mrm{Hom}_{\wh{\mcal{R}}}(\wh{\mcal{R}}\otimes_{\vphi, \mfS} \mfM, \wh{\mcal{R}}_{\infty}^{\vee})$ 
for any $\sigma\in \hat{G}$.
Take any 
$\hat{f}\in \mrm{Hom}(\wh{\mcal{R}}\otimes_{\vphi, \mfS} \mfM, \wh{\mcal{R}}_{\infty}^{\vee})$
and consider the following diagram:
\begin{equation}
\label{com2}
\displaystyle \xymatrix{
  \wh{\mcal{R}}\otimes_{\vphi,\mfS} \mfM 
  \ar^{\vphi_{\hat{\mfM}}}[r] \ar_{\hat{f}}[d]
& \wh{\mcal{R}}\otimes_{\vphi,\mfS} \mfM 
  \ar^{\vphi^{\vee}(\hat{f})}[d]\\
  \wh{\mcal{R}}_{\infty}^{\vee}
  \ar_{\vphi^{\vee}}[r]
& \wh{\mcal{R}}_{\infty}^{\vee}
. }
\end{equation}
By (\ref{aaa}), 
we obtain that the diagram (\ref{com2}) is also commutative. 
To check the relation $\sigma(\vphi^{\vee}(\hat{f}))=\vphi^{\vee}(\sigma(\hat{f}))$,
it suffices to show that 
$\sigma(\vphi^{\vee}(\hat{f}))(\vphi_{\hat{\mfM}}(x))=
\vphi^{\vee}(\sigma(\hat{f}))(\vphi_{\hat{\mfM}}(x))$
for any $x\in \wh{\mcal{R}}\otimes_{\vphi, \mfS} \mfM$ since 
$\mfM$ is of finite $E(u)$-height and, for any $a\in \wh{\mcal{R}}_{\infty}$, 
$\vphi(E(u))a=0$ if and only if $a=0$ by Lemma \ref{lemm}.
By (\ref{com2}), we have
\[
\sigma(\vphi^{\vee}(\hat{f}))(\vphi_{\hat{\mfM}}(x))
=\sigma(\vphi^{\vee}(\hat{f})(\sigma^{-1}(\vphi_{\hat{\mfM}}(x))))
=\sigma(\vphi^{\vee}(\hat{f})(\vphi_{\hat{\mfM}}(\sigma^{-1}(x))))
=\sigma(\vphi^{\vee}(\hat{f}(\sigma^{-1}(x)))).
\]
By replacing $\hat{f}$ with $\sigma(\hat{f})$ in the diagram (\ref{com2}), we have 
\[
\vphi^{\vee}(\sigma(\hat{f}))(\vphi_{\hat{\mfM}}(x))
=\vphi^{\vee}(\sigma(\hat{f}))(x)
=\vphi^{\vee}(\sigma(\hat{f}(\sigma^{-1}(x))
=\sigma(\vphi^{\vee}(\hat{f}(\sigma^{-1}(x))))
\]
and this finishes the proof.
\end{proof}

\begin{corollary}
Let $\hat{\mfM}$ be a $(\vphi,\hat{G})$-module.
Then the natural map of $(\vphi, \hat{G})$-modules
\begin{equation}
\label{self}
\hat{\mfM}\to (\hat{\mfM}^{\vee})^{\vee},\quad x\in \mfM\mapsto (f\mapsto f(x))\in (\mfM^{\vee})^{\vee}
\end{equation}
is an isomorphism of $(\vphi,\hat{G})$-modules.
\end{corollary}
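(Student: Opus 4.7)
The plan is to separate the argument into a Kisin-module biduality step and a $\hat{G}$-equivariance step. For the first, the Liu duality theorem cited above asserts that the pairings
$\langle\cdot,\cdot\rangle\colon \mfM\times\mfM^{\vee}\to \mfS_{\infty}^{\vee}$
(resp.\ $\to\mfS^{\vee}$)
are perfect, which is exactly the statement that the natural biduality map $\mfM\to(\mfM^{\vee})^{\vee}$, $x\mapsto(f\mapsto f(x))$, is an isomorphism of Kisin modules of height $r$; it is automatically Frobenius-compatible because the Frobenius on each $(\cdot)^{\vee}$ is defined through the perfect pairing. Base changing along $\vphi\colon \mfS\to\wh{\mcal{R}}$ then gives an $\wh{\mcal{R}}$-linear, $\vphi$-equivariant isomorphism between the underlying modules of $\hat{\mfM}$ and $(\hat{\mfM}^{\vee})^{\vee}$.

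The remaining task is to verify $\hat{G}$-equivariance, which I would do by applying Lemma \ref{lemma} twice. In the torsion case the lemma supplies a canonical identification
\[
\hat{\mfM}^{\vee}\simeq \mrm{Hom}_{\wh{\mcal{R}}}(\hat{\mfM},\wh{\mcal{R}}_{\infty}^{\vee}),
\]
and, since by Theorem \ref{dual} the dual $\mfM^{\vee}$ is again a torsion Kisin module of height $r$, a second application gives
\[
(\hat{\mfM}^{\vee})^{\vee}\simeq \mrm{Hom}_{\wh{\mcal{R}}}(\hat{\mfM}^{\vee},\wh{\mcal{R}}_{\infty}^{\vee}),
\]
with the analogous statements using $\wh{\mcal{R}}^{\vee}$ in the free case. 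By construction these identifications are $\hat{G}$-equivariant (the $\hat{G}$-actions on the duals were defined exactly through them), and they send the biduality map $\hat{\mfM}\to(\hat{\mfM}^{\vee})^{\vee}$ to the evaluation map $x\mapsto\mrm{ev}_x$. The short computation
\[
(\sigma\cdot\mrm{ev}_x)(f)=\sigma(\mrm{ev}_x(\sigma^{-1}f))=\sigma\bigl(\sigma^{-1}(f(\sigma x))\bigr)=f(\sigma x)=\mrm{ev}_{\sigma x}(f)
\]
for $\sigma\in\hat{G}$ then yields $\sigma\cdot\mrm{ev}_x=\mrm{ev}_{\sigma x}$, as required.

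I expect the whole argument to be essentially formal, with no real obstacle. The one point requiring care is confirming that the two $\mrm{Hom}$-identifications produced by Lemma \ref{lemma} fit together so that the natural biduality map on the tensor-product side really does correspond to the evaluation map on the double-$\mrm{Hom}$ side; this is a matter of unwinding the definitions, using that each identification is the unique $\wh{\mcal{R}}$-linear, $\vphi$- and $\hat{G}$-equivariant extension of the original pairing $\langle\cdot,\cdot\rangle$.
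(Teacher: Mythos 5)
Your proof is correct and follows essentially the same route as the paper: reduce to the Kisin-module biduality (which the paper cites directly from Liu's Proposition~3.1.7, equivalent to the perfectness of the pairing you invoke) and then check $\hat{G}$-equivariance after tensoring with $\wh{\mcal{R}}$ — which the paper dismisses as a ``straightforward calculation'' and you spell out via Lemma~\ref{lemma} and the evaluation-map identity $\sigma\cdot\mrm{ev}_x=\mrm{ev}_{\sigma x}$.
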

\begin{proof}
It is known that the map (\ref{self}) 
is an isomorphism by Proposition 3.1.7 in \cite{Li1}.
Hence it is enough to check that 
the map (\ref{self}) is compatible 
with Galois action after tensoring $\wh{\mcal{R}}$, but
it follows from straightforward calculations. 
\end{proof}

Since the assignment 
$\hat{\mfrak{M}}\mapsto \hat{\mfrak{M}}^{\vee}$
is a functor from the category of 
torsion (resp.\ free) $(\vphi,\hat{G})$-modules of height $r$ 
to itself,
we obtain the following.
\begin{corollary}
The assignment 
$\hat{\mfrak{M}}\mapsto \hat{\mfrak{M}}^{\vee}$
is an anti-equivalence on the category 
of torsion (resp.\ free)
$(\vphi, \hat{G})$-modules.
A quasi-inverse is given by
$\hat{\mfrak{M}}\mapsto \hat{\mfrak{M}}^{\vee}$.
\end{corollary}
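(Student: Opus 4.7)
The plan is to extract this corollary from the two preceding results with minimal additional work. Theorem \ref{dual} shows that $\hat{\mfM}^{\vee}$ lies again in the same category, and the preceding corollary supplies the biduality isomorphism $\hat{\mfM} \simeq (\hat{\mfM}^{\vee})^{\vee}$ of $(\vphi,\hat{G})$-modules. What remains is essentially bookkeeping: one has to check contravariant functoriality and then invoke a standard categorical principle.

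First I would verify that $\hat{\mfM} \mapsto \hat{\mfM}^{\vee}$ is a contravariant functor. Given a morphism $f \colon \hat{\mfM}_1 \to \hat{\mfM}_2$ of $(\vphi,\hat{G})$-modules, Liu's duality on Kisin modules (Proposition 3.1.7 of \cite{Li1}, already invoked in the preceding corollary) produces a dual morphism $f^{\vee} \colon \mfM_2^{\vee} \to \mfM_1^{\vee}$. The only new content is that $\mrm{id}_{\wh{\mcal{R}}} \otimes f^{\vee}$ intertwines the $\hat{G}$-actions defined in the paragraph before Theorem \ref{dual}. Via the isomorphisms of Lemma \ref{lemma}, this base change is identified with precomposition by $\mrm{id}_{\wh{\mcal{R}}} \otimes f$ on the corresponding Hom-modules, and precomposition by a $\hat{G}$-equivariant map is automatically $\hat{G}$-equivariant under the conjugation rule $(\sigma \cdot g)(x) = \sigma(g(\sigma^{-1} x))$ used to define the $\hat{G}$-action on Hom. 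Hence $f^{\vee}$ is a morphism of $(\vphi,\hat{G})$-modules and the assignment is contravariantly functorial.

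Second, the biduality map $\hat{\mfM} \to (\hat{\mfM}^{\vee})^{\vee}$ given by $x \mapsto (f \mapsto f(x))$ is visibly natural in $\hat{\mfM}$ — this is a direct consequence of how $f^{\vee}$ is defined on elements — and the preceding corollary shows it is an isomorphism of $(\vphi,\hat{G})$-modules. Combined with Step~1, this yields a natural isomorphism $\mrm{id} \simeq (\cdot^{\vee})^{\vee}$ of endofunctors on the category of torsion (resp.\ free) $(\vphi,\hat{G})$-modules of height $r$. The standard categorical criterion, that a contravariant endofunctor $F$ on $\mcal{C}$ equipped with a natural isomorphism $\mrm{id}_{\mcal{C}} \simeq F \circ F$ is an anti-equivalence admitting $F$ itself as a quasi-inverse, immediately produces the claim. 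I do not expect a serious obstacle here: all the substantive content (stability of the category under dualization, and biduality) has already been established in Theorem \ref{dual} and the preceding corollary, so the only verifications left are the two routine compatibilities described above.
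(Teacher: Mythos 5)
Your argument is correct and follows the same route the paper takes: the paper simply notes that $\hat{\mfM}\mapsto\hat{\mfM}^{\vee}$ is a contravariant endofunctor and, combined with the biduality isomorphism of the preceding corollary, deduces the anti-equivalence. Your proposal merely makes explicit the two routine checks (that $f^{\vee}$ intertwines the $\hat{G}$-actions, and that the evaluation map is natural) which the paper leaves implicit, and then invokes the same standard categorical criterion.
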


\subsection{Compatibility with Galois actions}
The goal of this subsection is to prove the following:
\begin{theorem}
\label{Gal}
Let $\hat{\mfM}$ be a $(\vphi,\hat{G})$-module.
Then we have
\begin{equation}
\hat{T}(\hat{\mfM}^{\vee})\simeq \hat{T}^{\vee}(\hat{\mfM})(r)
\end{equation}
as $\mbb{Z}_p[G]$-modules 
where $\hat{T}^{\vee}(\hat{\mfM})$ is the dual representation of 
$\hat{T}(\hat{\mfM})$ and
the symbol ``$(r)$'' is for the $r$-th Tate twist.
\end{theorem}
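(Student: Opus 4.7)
The plan is to identify the two representations first as $G_{\infty}$-modules, using the existing duality theory for Kisin modules, and then to upgrade to $G$-equivariance by lifting the pairing to the $(\varphi, \hat{G})$-level.

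For the $G_{\infty}$-level identification, Theorem~\ref{equiv}(1) provides $G_{\infty}$-equivariant isomorphisms $T_{\mfS}(\mfM) \xrightarrow{\sim} \hat{T}(\hat{\mfM})$ and $T_{\mfS}(\mfM^{\vee}) \xrightarrow{\sim} \hat{T}(\hat{\mfM}^{\vee})$ via $\theta$. On the other hand, the perfect Cartier pairing $\mfM \times \mfM^{\vee} \to \mfS^{\vee}_{\infty}$ (resp.\ $\mfS^{\vee}$) from \S3.1 induces a perfect $G_{\infty}$-equivariant pairing $T_{\mfS}(\mfM) \otimes T_{\mfS}(\mfM^{\vee}) \to T_{\mfS}(\mfS^{\vee}) \simeq \mathbb{Z}_p(r)$, hence a $G_{\infty}$-equivariant isomorphism $T_{\mfS}(\mfM^{\vee}) \simeq T_{\mfS}(\mfM)^{\vee}(r)$. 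Transferring via $\theta$ gives the desired isomorphism at the level of $\mathbb{Z}_p[G_{\infty}]$-modules.

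To upgrade to $G$-equivariance, I would lift the pairing to the $(\varphi, \hat{G})$-level. Base-changing the Kisin pairing along $\varphi : \mfS \to \whR$ yields a $\whR$-bilinear, $\varphi$-equivariant pairing $\hat{\mfM} \otimes_{\whR} \hat{\mfM}^{\vee} \to \whRi^{\vee}$ (resp.\ $\whR^{\vee}$), which is additionally $\hat{G}$-equivariant by construction of the $\hat{G}$-action on $\hat{\mfM}^{\vee}$ via Lemma~\ref{lemma}. From this evaluation pairing, produce a $G$-equivariant pairing
\[
\hat{T}(\hat{\mfM}) \otimes_{\mathbb{Z}_p} \hat{T}(\hat{\mfM}^{\vee}) \to \hat{T}(\whR^{\vee}_n) \simeq \mathbb{Z}/p^n\mathbb{Z}(r)
\]
in the torsion case (for $n$ with $p^n \hat{\mfM} = 0$, using (\ref{ga2})), or into $\hat{T}(\whR^{\vee}) \simeq \mathbb{Z}_p(r)$ in the free case (using (\ref{ga})), by sending $(\alpha,\beta)$ to the $\varphi$-equivariant $\whR$-linear map induced by $(m,f) \mapsto \alpha(m)\beta(f)$, where the multiplication takes place in $W(R)$ or its mod-$p^n$ quotient. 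The $G$-equivariance is then automatic from the definitions of the $G$-actions on the Hom sets, and perfectness of this pairing is inherited from perfectness of the Kisin pairing under $\theta$.

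The main obstacle is showing that the pairing in the previous paragraph is well-defined, i.e.\ that $(m,f) \mapsto \alpha(m)\beta(f)$ descends through the evaluation map $\hat{\mfM} \otimes_{\whR} \hat{\mfM}^{\vee} \to \whRi^{\vee}$ to give a genuine element of $\hat{T}(\whR^{\vee}_n)$. In the free case this is essentially immediate, since the evaluation pairing is surjective onto $\whR^{\vee}$. In the torsion case, a dévissage along the filtration of $\mfM$ by $\mfS/p\mfS$-free submodules as used in the proof of Lemma~\ref{lemma}, together with the snake lemma, reduces the question to the case $p\mfM = 0$, where an explicit computation in an $\whR/p$-basis of $\hat{\mfM}$ suffices. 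With well-definedness in hand, the $\hat{G}$-equivariance of the evaluation pairing propagates to the $\hat{T}$-level pairing, and comparison with the Kisin pairing via $\theta$ yields the full conclusion.
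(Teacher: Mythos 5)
Your first paragraph is fine: it reduces the $G_{\infty}$-level identification to Liu's Cartier duality for Kisin modules and $\theta$, exactly as the paper does implicitly. The gap is in the lifting step. The assignment $(\alpha,\beta)\mapsto\bigl((m,f)\mapsto\alpha(m)\beta(f)\bigr)$ does not descend along the evaluation pairing $\mrm{ev}\colon\hat{\mfM}\otimes_{\whR}\hat{\mfM}^{\vee}\to\whR^{\vee}_n$, and no d\'evissage can repair this, because the obstruction is already present at the level of pure linear algebra. Extend scalars to $W(\mrm{Fr}R)$ and use Proposition \ref{prop1}: there are $\vphi$-fixed bases $\{e_i\}$ of $(W(\mrm{Fr}R)\otimes_{\whR}\hat{\mfM})^{\vphi=1}$ and $\{f_j\}$ of $(W(\mrm{Fr}R)\otimes_{\whR}\hat{\mfM}^{\vee})^{\vphi=1}$ with $\mrm{ev}(e_i,f_j)=\delta_{ij}\nu$ for a fixed $\vphi$-fixed generator $\nu$. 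Your rule would force $\psi(\delta_{ij}\nu)=\alpha(e_i)\beta(f_j)$, hence $\alpha(e_i)\beta(f_j)=0$ whenever $i\neq j$ and $\alpha(e_i)\beta(f_i)$ independent of $i$; but $\alpha$ and $\beta$ range over the full dual lattices, so choosing $\alpha=e_1^{\ast}$ and $\beta=f_2^{\ast}$ already violates the first condition for $d\geq 2$. In short: the pairing you want is a contraction (a trace), not a pointwise product, and the product formula simply is not bilinear-compatible with $\mrm{ev}$. Your claim that well-definedness is ``essentially immediate'' in the free case for the same reason is also false---surjectivity of $\mrm{ev}$ is irrelevant; what would be needed is that the kernel of $\mrm{ev}$ be annihilated, which it is not.

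The paper sidesteps this entirely by introducing the \emph{covariant} functor
$\hat{T}_{\ast}(\hat{\mfM}):=(W(\mrm{Fr}R)\otimes_{\whR}\hat{\mfM})^{\vphi=1}$.
On the $\hat{T}_{\ast}$-side the pairing is obtained by \emph{restricting} the $W(\mrm{Fr}R)$-linear extension of the Kisin/\'etale pairing to $\vphi$-invariants; no factoring through $\mrm{ev}$ is required, so there is no well-definedness issue. Perfectness is then imported from Liu's Lemma 3.1.2 via a commutative diagram comparing the $\whR$-, $\wh{\cO^{\mrm{ur}}}$- and $W(\mrm{Fr}R)$-level pairings, yielding
$\hat{T}_{\ast}(\hat{\mfM}^{\vee})\simeq\hat{T}_{\ast}(\hat{\mfM})^{\vee}(-r)$.
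Finally, Corollary \ref{cov} identifies $\hat{T}^{\vee}\simeq\hat{T}_{\ast}$ as $\mbb{Z}_p[G]$-modules (this is where Proposition B.1.8.3 of Fontaine enters, to show $\mrm{Hom}_{\whR,\vphi}(\hat{\mfM},W(\mrm{Fr}R)_{\infty})=\hat{T}(\hat{\mfM})$), and dualizing once more gives the theorem. If you replace your proposed $\hat{T}$-level pairing by this detour through $\hat{T}_{\ast}$, the rest of your outline (compatibility of $\hat{G}$-actions, comparison via $\theta$) goes through.
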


First we construct a covariant functor
for the category of $(\vphi, \hat{G})$-modules. 
Recall that, if $\hat{\mfM}=(\mfM,\vphi_{\mfM},\hat{G})$ 
is a $(\vphi, \hat{G})$-module,
we often abuse of notations by 
denoting $\hat{\mfM}$ the underlying 
module $\wh{\mcal{R}}\otimes_{\vphi, \mfS} \mfM$.
\begin{proposition}
\label{prop1}
Let $\hat{\mfM}$ be a $(\vphi, \hat{G})$-module.
Then the natural $W(\mrm{Fr}R)$-linear map
\begin{equation}
\label{fund1}
W(\mrm{Fr}R)\otimes_{\mbb{Z}_p}
(W(\mrm{Fr}R)\otimes_{\wh{\mcal{R}}} \hat{\mfM})^{\vphi=1}
\to 
W(\mrm{Fr}R)\otimes_{\wh{\mcal{R}}} 
\hat{\mfM},\quad a\otimes x\mapsto ax, 
\end{equation}
for any  $a\in W(\mrm{Fr}R)$ and $x\in(W(\mrm{Fr}R)\otimes_{\wh{\mcal{R}}} \hat{\mfM})^{\vphi=1}$,
is an isomorphism, which is compatible 
with $\vphi$-structures and $G$-actions.
\end{proposition}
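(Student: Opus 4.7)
The plan is to reduce the proposition to Fontaine's equivalence (\ref{Fon1}) for étale $\vphi$-modules, by converting both sides of (\ref{fund1}) into scalar extensions of $T_{\ast}(M)$, where $M := \cO_{\E} \otimes_{\mfS} \mfM$ is the étale $\vphi$-module attached to the underlying Kisin module $\mfM$.

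First I would simplify the target $W(\mrm{Fr}R) \otimes_{\whR} \hat{\mfM}$. Unwinding $\hat{\mfM} = \whR \otimes_{\vphi, \mfS} \mfM$ and using the compatible inclusions $\mfS \hookrightarrow \cO_{\E} \hookrightarrow W(\mrm{Fr}R)$, one obtains
\[
W(\mrm{Fr}R) \otimes_{\whR} \hat{\mfM} \simeq W(\mrm{Fr}R) \otimes_{\vphi, \mfS} \mfM \simeq W(\mrm{Fr}R) \otimes_{\vphi, \cO_{\E}} M.
\]
Since $M$ is étale, the $\cO_{\E}$-linearization $\vphi^{\ast}_{M} \colon \cO_{\E} \otimes_{\vphi, \cO_{\E}} M \to M$ is an isomorphism, and tensoring over $\cO_{\E}$ with $W(\mrm{Fr}R)$ yields a natural isomorphism $W(\mrm{Fr}R) \otimes_{\vphi, \cO_{\E}} M \simeq W(\mrm{Fr}R) \otimes_{\cO_{\E}} M$, compatible with $\vphi$ and $G$-actions.

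Next I would invoke Fontaine's equivalence (\ref{Fon1}), which provides a canonical $\vphi$- and $G_{\infty}$-equivariant isomorphism $\wh{\cO^{\mrm{ur}}} \otimes_{\mbb{Z}_p} T_{\ast}(M) \simeq \wh{\cO^{\mrm{ur}}} \otimes_{\cO_{\E}} M$. Extending scalars along $\wh{\cO^{\mrm{ur}}} \hookrightarrow W(\mrm{Fr}R)$ gives
\[
W(\mrm{Fr}R) \otimes_{\mbb{Z}_p} T_{\ast}(M) \simeq W(\mrm{Fr}R) \otimes_{\cO_{\E}} M \simeq W(\mrm{Fr}R) \otimes_{\whR} \hat{\mfM}.
\]
The key point is then the computation $(W(\mrm{Fr}R) \otimes_{\mbb{Z}_p} T_{\ast}(M))^{\vphi = 1} = T_{\ast}(M)$: in the free case this reduces to $W(\mrm{Fr}R)^{\vphi = 1} = \mbb{Z}_p$, which holds because the fixed field of $x \mapsto x^p$ in any field of characteristic $p$ is $\mbb{F}_p$; in the torsion case, after killing a sufficiently large power of $p$, one uses the analogous identity $W_n(\mrm{Fr}R)^{\vphi = 1} = \mbb{Z}/p^n \mbb{Z}$.

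Combining these identifications, the natural map (\ref{fund1}) becomes the identity on $W(\mrm{Fr}R) \otimes_{\mbb{Z}_p} T_{\ast}(M)$, hence is an isomorphism. Compatibility with $\vphi$ is built into each step. For $G$-equivariance, one notes that every identification above is $G_{\infty}$-equivariant through Fontaine's theory, while the $G$-actions on both sides of (\ref{fund1}) are produced from the $\hat{G}$-action on $\hat{\mfM}$ (pulled back along $G \twoheadrightarrow \hat{G}$) together with the canonical $G$-action on $W(\mrm{Fr}R)$, so the formula $a \otimes x \mapsto ax$ is visibly equivariant. The main delicate point will be the $\vphi$-fixed part computation in the $p$-torsion situation and keeping consistent track of which tensor products use $\vphi$ and which do not.
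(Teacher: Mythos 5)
Your proposal is correct and follows essentially the same path as the paper's proof: pass to the \'etale $\vphi$-module $M = \cO_{\E}\otimes_{\mfS}\mfM$, use \'etaleness of $M$ to untwist the $\vphi$-tensor, invoke Fontaine's comparison (\ref{Fon1}) to identify $W(\mrm{Fr}R)\otimes_{\cO_{\E}} M$ with $W(\mrm{Fr}R)\otimes_{\mbb{Z}_p} T_{\ast}(M)$, and then take $\vphi$-invariants. The only cosmetic difference is that you make the $\vphi$-fixed-point computation $(W(\mrm{Fr}R)\otimes_{\mbb{Z}_p}T_{\ast}(M))^{\vphi=1}=T_{\ast}(M)$ explicit (via $W_n(\mrm{Fr}R)^{\vphi=1}=\mbb{Z}/p^n\mbb{Z}$), whereas the paper records it implicitly as the passage from (\ref{hosoku}) to (\ref{is1}).
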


\begin{proof}
A non-trivial assertion of this proposition is 
only the bijectivity of the map (\ref{fund1}).
First we note the following natural $\vphi$-equivariant isomorphisms:
\begin{align*}
        W(\mrm{Fr}R)\otimes_{\wh{\mcal{R}}} \hat{\mfM}
&\simeq W(\mrm{Fr}R)\otimes_{\vphi,\mfS} \mfM \\
&\simeq W(\mrm{Fr}R)\otimes_{\cO_{\E}}(\cO_{\E}\otimes_{\vphi,\mfS}M)\\
&\overset{1\otimes \vphi^{\ast}_M}{\longrightarrow} W(\mrm{Fr}R)\otimes_{\cO_{\E}} M
\end{align*}
where $M:=\cO_{\E}\otimes_{\mfS} \mfM$ 
is the \'etale $\vphi$-module corresponding to $\mfM$.
Here the bijectivity of $1\otimes \vphi^{\ast}_M$, where $\vphi^{\ast}_M$ 
is the $\cO_{\E}$-linearization of $\vphi_M$, 
follows from the \'etaleness of $M$. 
Combining the above isomorphisms and the relation (\ref{Fon1}),
we obtain the following natural $\vphi$-equivalent bijective  maps
\begin{equation}
\label{hosoku}
W(\mrm{Fr}R)\otimes_{\wh{\mcal{R}}} \hat{\mfM}
\overset{\simeq}{\longrightarrow}  
W(\mrm{Fr}R)\otimes_{\cO_{\E}} M
\overset{\simeq}{\longleftarrow}
W(\mrm{Fr}R)\otimes_{\mbb{Z}_p}
(\cO^{\mrm{ur}}\otimes_{\cO_{\E}} M)^{\vphi=1} 
\end{equation}
and hence we obtain 
\begin{equation}
\label{is1}
(W(\mrm{Fr}R)\otimes_{\wh{\mcal{R}}} \hat{\mfM})^{\vphi=1}
\simeq
(\cO^{\mrm{ur}}\otimes_{\cO_{\E}} M)^{\vphi=1}. 
\end{equation}
By (\ref{hosoku}) and (\ref{is1}),
we obtain an isomorphism
\[
W(\mrm{Fr}R)\otimes_{\mbb{Z}_p}
(W(\mrm{Fr}R)\otimes_{\wh{\mcal{R}}} \hat{\mfM})^{\vphi=1}
\overset{\simeq}{\longrightarrow} 
W(\mrm{Fr}R)\otimes_{\wh{\mcal{R}}} \hat{\mfM}
\]  
and the desired result follows from the fact that this isomorphism coincides with the natural map (\ref{fund1}).
\end{proof}

For any  $(\vphi, \hat{G})$-module $\hat{\mfM}$,
we set
\[
\hat{T}_{\ast}(\hat{\mfM}):=(W(\mrm{Fr}R)\otimes_{\hat{\mcal{R}}}\hat{\mfM})^{\vphi=1}.
\]
Since the Frobenius action on 
$W(\mrm{Fr}R)\otimes_{\hat{\mcal{R}}}\hat{\mfM}$ 
commutes with $G$-action,
we see that $G$ acts on  $\hat{T}_{\ast}(\hat{\mfM})$ stable. 
We have shown in the proof of Proposition \ref{prop1} (see (\ref{is1}))
that 
\[
\hat{T}_{\ast}(\hat{\mfM})\simeq T_{\ast}(M)
\]
as $\mbb{Z}_p[G_{\infty}]$-modules
for $M:=\cO_{\e}\otimes_{\mfS} \mfM$ (recall that the functor $T_{\ast}$ is defined in Section 2.2).
In particular, 
if $\hat{\mfM}$ is free and $d:=\mrm{rank}_{\mfS}(\mfM)$, 
$\hat{T}_{\ast}(\hat{\mfM})$ is free of 
rank $d$ as a $\mbb{Z}_p$-module.
The association $\hat{\mfM}\mapsto \hat{T}_{\ast}(\hat{\mfM})$ 
is a covariant functor from the category of 
$(\vphi, \hat{G})$-modules of height $r$
to the category $\mrm{Rep}_{\mbb{Z}_p}(G)$ 
of finite $\mbb{Z}_p[G]$-modules.
By the exactness of the functor $T_{\ast}$,
the functor $\hat{T}_{\ast}$ is an exact functor.

\begin{corollary}
\label{cov}
The $\mbb{Z}_p$-representation $\hat{T}_{\ast}(\hat{\mfM})$ of $G$ is the dual of 
$\hat{T}(\hat{\mfM})$, that is,
\[
\hat{T}^{\vee}(\hat{\mfM})\simeq \hat{T}_{\ast}(\hat{\mfM})
\]
as $\mbb{Z}_p[G]$-modules. 
\end{corollary}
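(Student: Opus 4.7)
The plan is to exhibit a natural $G$-equivariant perfect pairing
\[
\langle \cdot,\cdot\rangle\colon \hat{T}(\hat{\mfM})\times \hat{T}_{\ast}(\hat{\mfM})\to \mbb{Z}_p
\]
(with values in $\mbb{Q}_p/\mbb{Z}_p$ when $\hat{\mfM}$ is torsion), from which the identification $\hat{T}^{\vee}(\hat{\mfM})\simeq \hat{T}_{\ast}(\hat{\mfM})$ follows immediately.

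First I would construct the pairing. Given $f\in \hat{T}(\hat{\mfM})$, extend scalars to get a $\vphi$-equivariant $W(\mrm{Fr}R)$-linear map $\tilde{f}\colon W(\mrm{Fr}R)\otimes_{\wh{\mcal{R}}}\hat{\mfM}\to W(\mrm{Fr}R)$ (respectively to $W(\mrm{Fr}R)_\infty:=W(\mrm{Fr}R)[1/p]/W(\mrm{Fr}R)$ in the torsion case). For $y\in \hat{T}_{\ast}(\hat{\mfM})$, set $\langle f,y\rangle:=\tilde{f}(y)$. Since $\tilde{f}$ is $\vphi$-equivariant and $y$ is $\vphi$-fixed, the image lies in $W(\mrm{Fr}R)^{\vphi=1}=\mbb{Z}_p$ (resp.\ $W(\mrm{Fr}R)_{\infty}^{\vphi=1}=\mbb{Q}_p/\mbb{Z}_p$), these identifications being standard Artin–Schreier computations since $\mrm{Fr}R$ is algebraically closed.

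Next I would verify $G$-equivariance. Writing $y=\sum a_i\otimes x_i$, the definitions $(\sigma.f)(x)=\sigma(f(\sigma^{-1}x))$ on $\hat{T}(\hat{\mfM})$ and the diagonal action on $\hat{T}_{\ast}(\hat{\mfM})$ give
\[
\langle \sigma.f,\sigma.y\rangle=\sum \sigma(a_i)\,\sigma(f(x_i))=\sigma(\langle f,y\rangle)=\langle f,y\rangle,
\]
the last equality because $G$ acts trivially on $\mbb{Z}_p$ (resp.\ $\mbb{Q}_p/\mbb{Z}_p$). Setting $L_y(f):=\langle f,y\rangle$, this identity rewrites as $L_{\sigma.y}=\sigma.L_y$, so $y\mapsto L_y$ defines a $G$-equivariant map $\hat{T}_{\ast}(\hat{\mfM})\to \hat{T}^{\vee}(\hat{\mfM})$.

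The remaining point is perfectness, which I would establish by reducing to classical Fontaine theory. By Theorem \ref{equiv}(1), $\theta\colon T_{\mfS}(\mfM)\overset{\sim}{\to} \hat{T}(\hat{\mfM})$ as $G_{\infty}$-modules; by Liu's theorem (Corollary 2.2.2 of \cite{Li1}) together with Fontaine's theory (Section 2.2), $T_{\mfS}(\mfM)\simeq T(M)$ is the $\mbb{Z}_p$-linear dual of $T_{\ast}(M)$ where $M=\cO_{\E}\otimes_{\mfS}\mfM$; and the proof of Proposition \ref{prop1} already provided an isomorphism $\hat{T}_{\ast}(\hat{\mfM})\simeq T_{\ast}(M)$ as $G_{\infty}$-modules. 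Unwinding the construction of $\theta$ (which maps $f$ to $a\otimes m\mapsto a\vphi(f(m))$) and of the identifications above, the pairing $\langle\cdot,\cdot\rangle$ becomes, after these identifications, the standard Fontaine pairing $T(M)\times T_{\ast}(M)\to \mbb{Q}_p/\mbb{Z}_p$ (resp.\ $\mbb{Z}_p$), which is perfect. Since both sides have the same $\mbb{Z}_p$-structure as confirmed by this identification, perfectness transfers, and the $G$-equivariant map $\hat{T}_{\ast}(\hat{\mfM})\to \hat{T}^{\vee}(\hat{\mfM})$ is an isomorphism. The main technical step is this compatibility check between our pairing and Fontaine's pairing; it is routine but needs attentive bookkeeping of the Frobenius twist built into $\theta$.
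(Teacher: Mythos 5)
Your approach is correct, but it differs from the paper's. The paper proves the statement by a direct chain of Hom identifications: it computes
$\mrm{Hom}_{\mbb{Z}_p}(\hat{T}_{\ast}(\hat{\mfM}),\mbb{Q}_p/\mbb{Z}_p)
\simeq \mrm{Hom}_{W(\mrm{Fr}R),\vphi}(W(\mrm{Fr}R)\otimes_{\wh{\mcal{R}}}\hat{\mfM},W(\mrm{Fr}R)_{\infty})
\simeq \mrm{Hom}_{\wh{\mcal{R}},\vphi}(\hat{\mfM},W(\mrm{Fr}R)_{\infty})$
using Proposition \ref{prop1} and Hom-tensor adjunction, and then concludes by a descent argument (from the proof of Lemma 3.1.1 of \cite{Li3}, via Fontaine's Proposition B.1.8.3) that the last Hom group already equals $\mrm{Hom}_{\wh{\mcal{R}},\vphi}(\hat{\mfM},W(R)_{\infty})=\hat{T}(\hat{\mfM})$. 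In contrast, you build the evaluation pairing $\langle f,y\rangle=\tilde f(y)$ explicitly and reduce its perfectness to the classical Fontaine duality $T(M)\times T_{\ast}(M)\to\mbb{Q}_p/\mbb{Z}_p$ through the $G_{\infty}$-equivariant identifications $\hat{T}(\hat{\mfM})\simeq T_{\mfS}(\mfM)\simeq T(M)$ (via $\theta$ and Liu's Corollary 2.2.2) and $\hat{T}_{\ast}(\hat{\mfM})\simeq T_{\ast}(M)$ (via the proof of Proposition \ref{prop1}). Both routes work, and your version makes the duality more concrete and bypasses the descent lemma; however, the burden shifts to the compatibility check you defer. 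That check is not cosmetic: $\theta$ carries a $\vphi$-twist ($\theta(g)(a\otimes m)=a\vphi(g(m))$) and the identification $\hat{T}_{\ast}(\hat{\mfM})\simeq T_{\ast}(M)$ in (\ref{hosoku}) passes through $1\otimes\vphi_M^{\ast}$, so one must verify that these twists cancel so that your pairing really does match Fontaine's pairing under the identifications; merely knowing the two sides are abstractly dual does not show that your explicitly defined map $y\mapsto L_y$ is an isomorphism. If you carry out that bookkeeping, your argument is complete and gives a clean alternative to the paper's descent-based proof.
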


\begin{proof}
Suppose $\hat{\mfM}$ is killed by some power of $p$.
By Proposition \ref{prop1} and 
the relation $W(\mrm{Fr}R)_{\infty}^{\vphi=1}=\mbb{Q}_p/\mbb{Z}_p$,
we have 
\begin{align*}
\mrm{Hom}_{\mbb{Z}_p}(\hat{T}_{\ast}(\hat{\mfM}),\mbb{Q}_p/\mbb{Z}_p)
&\simeq 
\mrm{Hom}_{W(\mrm{Fr}R),\vphi}
(W(\mrm{Fr}R)\otimes_{\mbb{Z}_p}
(W(\mrm{Fr}R)\otimes_{\wh{\mcal{R}}} \hat{\mfM})^{\vphi=1},
W(\mrm{Fr}R)_{\infty})\\
&\simeq
\mrm{Hom}_{W(\mrm{Fr}R),\vphi}(
W(\mrm{Fr}R)\otimes_{\wh{\mcal{R}}} \hat{\mfM},
W(\mrm{Fr}R)_{\infty})\\
&\simeq
\mrm{Hom}_{\wh{\mcal{R}},\vphi}(
\hat{\mfM},
W(\mrm{Fr}R)_{\infty})=\hat{T}(\hat{\mfM}).
\end{align*}
The last equality follows from 
the proof of Lemma 3.1.1 of \cite{Li3},
but we include
a proof here for the sake of completeness.
Take any 
$h\in \mrm{Hom}_{\wh{\mcal{R}},\vphi}(
\hat{\mfM},
W(\mrm{Fr}R)_{\infty})$.
It is enough to prove that 
$h$ has in fact values in $W(R)_{\infty}$.
Put $g:=h|_{\mfM}$.
Since $g$ is a $\vphi(\mfS)$-linear morphism 
from $\mfM$ to $W(R)_{\infty}=\vphi(W(R)_{\infty})$,
there exists a $\mfS$-linear morphism 
$\mfrak{g}\colon \mfM\to W(\mrm{Fr}R)_{\infty}$ 
such that $\vphi(\mfrak{g})=g$.
Furthermore, we see that $\mfrak{g}$ is $\vphi$-equivariant.
Note that $\mfrak{g}(\mfM)\subset W(\mrm{Fr}R)_{\infty}$ 
is a $\mfS$-finite type $\vphi$-stable submodule
and of $E(u)$-height $r$.
By \cite{Fo}, Proposition B.1.8.3, we have 
$\mfrak{g}(\mfM)\subset \mfS_{\infty}^{\mrm{ur}}$. 
Since 
\[
h(a\otimes x)=a\vphi(\mfrak{g}(x))
\]
for any $a\in \wh{\mcal{R}}$ and $x\in \mfM$, we obtain that 
$h$ has values in $W(R)_{\infty}$.

The case $\hat{\mfM}$ is free, 
we obtain the desired result 
by the same proof as above 
if we replace $W(\mrm{Fr}R)_{\infty}$ 
(resp.\ $\mbb{Q}_p/\mbb{Z}_p$) with 
$W(\mrm{Fr}R)$ (resp.\ $\mbb{Z}_p$). 
\end{proof}

In the rest of this subsection, 
we prove Theorem \ref{Gal}.
We only prove the case where 
$\mfM$ is killed by $p^n$ for some integer $n\ge 1$
(we can prove the free case 
by an analogous way and the free case is 
easier than the torsion case).

First we consider natural pairings
\begin{equation}
\label{pair1}
\langle \cdot , \cdot \rangle \colon \mfM \times \mfM^{\vee}\to
\mfS_n^{\vee}
\end{equation}
and 
\begin{equation}
\label{pair2}
\langle \cdot , \cdot \rangle \colon M\times M^{\vee}\to \cO_{\E,n}^{\vee}
\end{equation}
which are perfect and compatible with $\vphi$-structures.
Here $M:=\cO_{\E}\otimes_{\mfS} \mfM$ is the \'etale $\vphi$-module
corresponding to $\mfM$.
We can extend the pairing (\ref{pair2}) to the $\vphi$-equivalent perfect pairing  
\[
(\wh{\cO^{\mrm{ur}}}\otimes_{\cO_{\E}} M)
\times (\wh{\cO^{\mrm{ur}}}\otimes_{\cO_{\E}}M^{\vee})
\to \cO_n^{\mrm{ur},\vee}.
\]
Since the above pairing is $\vphi$-equivariant and 
$(\cO_n^{\mrm{ur},\vee})^{\vphi=1}\simeq \mbb{Z}_p/p^n\mbb{Z}_p(-r)$,
we have a pairing 
\begin{equation}
\label{pair23}
(\wh{\cO^{\mrm{ur}}}\otimes_{\cO_{\E}} M)^{\vphi=1}
\times (\wh{\cO^{\mrm{ur}}}\otimes_{\cO_{\E}}M^{\vee})^{\vphi=1}
\to \mbb{Z}_p/p^n\mbb{Z}_p(-r)
\end{equation}
compatible with $G_{\infty}$-actions.
Liu showed in the proof of Lemma 3.1.2 in \cite{Li1} 
that this pairing is perfect.
By similar way, we have the following paring
\begin{equation}
\label{pair233}
(W(\mrm{Fr}R)\otimes_{\cO_{\E}} M)^{\vphi=1}
\times (W(\mrm{Fr}R)\otimes_{\cO_{\E}}M^{\vee})^{\vphi=1}
\to \mbb{Z}_p/p^n\mbb{Z}_p(-r).
\end{equation} 
On the other hand,
the pairing (\ref{pair1}) induces a pairing 
\begin{equation}
\label{pair3}
(\wh{\mcal{R}}\otimes_{\vphi, \mfS}\mfM) 
\times (\wh{\mcal{R}}\otimes_{\vphi, \mfS}\mfM^{\vee})\to
\wh{\mcal{R}}_n^{\vee}.
\end{equation}
We can extend the pairing (\ref{pair3}) to the $\vphi$-equivalent perfect pairing
\[
(W(\mrm{Fr}R)\otimes_{\wh{\mcal{R}}}(\wh{\mcal{R}}\otimes_{\vphi, \mfS}\mfM)) 
\times (W(\mrm{Fr}R)\otimes_{\wh{\mcal{R}}}(\wh{\mcal{R}}\otimes_{\vphi, \mfS}\mfM^{\vee}))\to
W(\mrm{Fr}R)\otimes_{\wh{\mcal{R}}} \wh{\mcal{R}}_n^{\vee}.
\]
Since the above pairing is $\vphi$-equivariant and 
$(W(\mrm{Fr}R)\otimes_{\wh{\mcal{R}}} \wh{\mcal{R}}_n^{\vee})^{\vphi=1}\simeq \mbb{Z}_p/p^n\mbb{Z}_p(-r)$,
we have a pairing 
\begin{equation}
\label{pair4}
(W(\mrm{Fr}R)\otimes_{\wh{\mcal{R}}}(\wh{\mcal{R}}\otimes_{\vphi, \mfS}\mfM))^{\vphi=1} 
\times (W(\mrm{Fr}R)\otimes_{\wh{\mcal{R}}}(\wh{\mcal{R}}\otimes_{\vphi, \mfS}\mfM^{\vee}))^{\vphi=1}\to
\mbb{Z}_p/p^n\mbb{Z}_p(-r)
\end{equation}
compatible with $G$-actions.
Since we have the natural isomorphism 
$\wh{\cO^{\mrm{ur}}}\otimes_{\mbb{Z}_p}(\wh{\cO^{\mrm{ur}}}\otimes_{\cO_{\E}} M)^{\vphi=1}
\overset{\simeq}{\longrightarrow}\wh{\cO^{\mrm{ur}}}\otimes_{\cO_{\E}} M$,
we obtain the $\vphi$-equivariant isomorphisms
\begin{equation}
\label{isom1}
W(\mrm{Fr}R)\otimes_{\wh{\mcal{R}}}\hat{\mfM} \overset{\simeq}{\longrightarrow}
W(\mrm{Fr}R)\otimes_{\cO_{\E}} M \overset{\simeq}{\longleftarrow}
W(\mrm{Fr}R)\otimes_{\mbb{Z}_p}(\wh{\cO^{\mrm{ur}}}\otimes_{\cO_{\E}} M)^{\vphi=1}.
\end{equation}
Therefore, combining (\ref{pair23}), (\ref{pair233}), (\ref{pair4}) and (\ref{isom1}), 
we have the following diagram
\begin{center}
$\displaystyle \xymatrix{
  \! \! \!(W(\mrm{Fr}R)\otimes_{\wh{\mcal{R}}}\hat{\mfM})^{\vphi=1}\! \! \!
  \ar[d]_{\simeq} 
& \! \! \! \times \! \! \!
& \! \! \!(W(\mrm{Fr}R)\otimes_{\wh{\mcal{R}}}\hat{\mfM}^{\vee})^{\vphi=1}
  \ar[d]_{\simeq} \ar[r]
& \mbb{Z}_p/p^n\mbb{Z}_p(-r)
  \ar@{=}[d] \\
  \! \! \!(W(\mrm{Fr}R)\otimes_{\cO_{\E}}M)^{\vphi=1}\! \! \!
& \! \! \! \times \! \! \!
& \! \! \!(W(\mrm{Fr}R)\otimes_{\cO_{\E}}M^{\vee})^{\vphi=1}
  \ar[r]
& \mbb{Z}_p/p^n\mbb{Z}_p(-r) \\
  (\wh{\cO^{\mrm{ur}}}\otimes_{\cO_{\E}} M)^{\vphi=1}
  \ar[u]^{\simeq}
& \! \! \! \times \! \! \!
& \! \! \!(\wh{\cO^{\mrm{ur}}}\otimes_{\cO_{\E}} M^{\vee})^{\vphi=1}
  \ar[u]^{\simeq} \ar[r]
& \mbb{Z}_p/p^n\mbb{Z}_p(-r) 
\ar@{=}[u]
. }$
\end{center}
It is a straightforward calculation to check that 
the above diagram is commutative.
Since the bottom pairing is perfect, 
we see that the top pairing is also perfect.
This implies $\hat{T}_{\ast}(\hat{\mfM}^{\vee})\simeq \hat{T}_{\ast}(\hat{\mfM})(-r)$ and 
therefore, we have the desired result by Corollary \ref{cov}.

\begin{remark}
A triple $\hat{\mfM}=(\mfM,\vphi,\hat{G})$ 
is called a {\it weak $(\vphi,\hat{G})$-module}
if it only satisfies axioms (1), (2), (3) and (4) in Definition \ref{Liu}.
Weak $(\vphi,\hat{G})$-modules are related with potentially semistable representations.
By the same proofs, all results in this section hold if we replace ``$(\vphi,\hat{G})$-modules''
with ``weak $(\vphi,\hat{G})$-modules''.
\end{remark}

\subsection{Comparisons with Breuil modules}
Throughout this subsection,
we suppose $p\ge 3$ and $r<p-1$.
In this situation, Liu showed in \cite{Li2} that 
there exists a contravariant functor $T_{\mrm{st}}$ 
from the category $\mrm{Mod}^r_{/S}$ of strongly divisible lattices of weight $r$
into the category $\mrm{Rep}^r_{\mbb{Z}_p}(G)$ which gives an 
equivalence of those categories.
Hence there exists an equivalence of categories between $\mrm{Mod}^r_{/S}$ 
and $\mrm{Mod}^{r,\hat{G},\mrm{fr}}_{/\mfS}$ (see also Theorem \ref{equiv} (2)). 
Liu's arguments in Section 5 of \cite{Li2} give an explicit correspondence
between the objects of those categories as below.

Let $\hat{\mfM}$ be a free $(\vphi,\hat{G})$-module of height $r$ and put $T:=\hat{T}(\hat{\mfM})$.
Then $V:=T\otimes_{\mbb{Z}_p} \mbb{Q}_p$ is a semistable $p$-adic representation of $G$ with 
Hodge-Tate weights in $[0,r]$
and $\mathscr{D}:=S_{K_0}\otimes_{\vphi, \mfS} \mfM$ has 
a structure as a  Breuil module 
corresponding\footnote{
Breuil showed, in Th\'eor\`eme 6.1.1 of \cite{Br}, an equivalence of categories between 
the category of semistable representations of $G$ and 
the category of Breuil modules.} 
to $V$.
If we put $\mcal{M}:=S\otimes_{\vphi, \mfS} \mfM\subset \mathscr{D}$,
then $\mcal{M}$ has a structure as a quasi-strongly divisible 
lattice in $\mathscr{D}$. 
Liu showed in Lemma 3.5.3 of \cite{Li2} that
$\mcal{M}$ is in fact automatically stable under the monodromy operator
$N_{\mathscr{D}}$ of the Breuil module $\mathscr{D}$ and 
thus $\mcal{M}$ has a structure as a strongly divisible 
lattice in $\mathscr{D}$. 
Moreover, $T_{\mrm{st}}(\mcal{M})$ is isomorphic to $T$.
On the other hands, 
we can realize the monodromy operator $N_{\mathscr{D}}$
via the action of $\tau$ on $\hat{\mfM}$. 
To see this, 
we define a natural $G$-action on $B^{+}_{\mrm{cris}}\otimes_{S_{K_0}} \mathscr{D}$ as follows:
For any $\sigma\in G$ and $a\otimes x\in B^{+}_{\mrm{cris}}\otimes_{S_{K_0}} \mathscr{D}$,
define
\[
\sigma(a\otimes x)=\sum^{\infty}_{i=0} \sigma(a)\gamma_i
(-\mrm{log}([{\underline{\e}}(\sigma)]))\otimes N^i_{\mathscr{D}}(x),
\]
where ${\underline{\e}}(\sigma):=(\sigma(\pi_n)/\pi_n)_{n\ge 0}\in R$.
This action is a well-defined $B^{+}_{\mrm{cris}}$-semi-linear $G$-action on 
$B^{+}_{\mrm{cris}}\otimes_{S_{K_0}} \mathscr{D}$ (\cite{Li2}, Lemma 5.1.1). 
From now on, 
we fix $t:=-\mrm{log}([{\underline{\e}}(\tau)])$.
Then, for any $n\ge 0$ and $x\in \mathscr{D}$, 
an induction on $n$  shows that 
\[
(\tau-1)^n(x)=\sum^{\infty}_{m=n}({\sum_{i_1+i_2+\cdots i_n=m, i_j\ge 1}
{\frac{m!}{i_1!\cdots i_n!}}})\gamma_m(t)\otimes N_{\mathscr{D}}^m(x).
\]
Hence if we put 
$\mrm{log}(\tau)(x):=\sum^{\infty}_{n=1}(-1)^{n-1}\frac{(\tau-1)^n}{n}(x)$ 
for any $x\in \mathscr{D}$,
we have 
\[
\mrm{log}(\tau)(x)=t\otimes N_{\mathscr{D}}(x).
\]
Therefore, we obtain the well-defined functor
\[
\mcal{M}_{\whR}\colon \mrm{Mod}^{r,\hat{G},\mrm{fr}}_{/\mfS}\to \mrm{Mod}^r_{/S},\quad 
\hat{\mfM}\to S\otimes_{\vphi, \mfS} \mfM  
\]
which makes the following diagram commutative:
\begin{center}
$\displaystyle \xymatrix{  
  \mrm{Mod}^{r,\hat{G},\mrm{fr}}_{/\mfS}\ar_{\hat{T}}^{\simeq}[r] 
  \ar_{\mcal{M}_{\whR}}[d]
& \mrm{Rep}^r_{\mbb{Z}_p}(G) 
  \ar @{=}[d]\\
  \mrm{Mod}^r_{/S}
  \ar_{T_{\mrm{st}}}^{\simeq}[r]
& \mrm{Rep}^r_{\mbb{Z}_p}(G)
. }$
\end{center}
Here we equip $\mcal{M}_{\whR}(\hat{\mfM})=S\otimes_{\vphi, \mfS} \mfM$ 
with the following additional structures;
an $S$-submodule $\mrm{Fil}^r(\mcal{M}_{\whR}(\hat{\mfM}))$
of $\mcal{M}_{\whR}(\hat{\mfM})$ defined by
\[
\mrm{Fil}^r(\mcal{M}_{\whR}(\hat{\mfM})):=
\{x\in \mcal{M}_{\whR}(\hat{\mfM})| 
(1\otimes \vphi)(x)\in \mrm{Fil}^rS\otimes_{\mfS}\mfM \},
\]
a $\vphi_{S}$-semi-linear endomorphism 
$\vphi_r\colon \mrm{Fil}^r(\mcal{M}_{\whR}(\hat{\mfM}))\to \mcal{M}_{\whR}(\hat{\mfM})$ 
defined by the composition
\[
\mrm{Fil}^r(\mcal{M}_{\whR}(\hat{\mfM}))
\overset{1\otimes \vphi}{\longrightarrow} 
\mrm{Fil}^rS\otimes_{\mfS}\mfM
\overset{\frac{1}{p^r}\vphi\otimes 1}{\longrightarrow} 
S\otimes_{\vphi, \mfS} \mfM= \mcal{M}_{\whR}(\hat{\mfM})
\] 
and a monodromy operator $N$ on $\mcal{M}_{\whR}(\hat{\mfM})$
given by $N=\frac{1}{t}\mrm{log}(\tau)$.
Since the above diagram is commutative, 
we see that the functor $\mcal{M}_{\whR}$ 
gives an equivalence of categories.
By the Galois compatibility of a duality (cf.\ Theorem \ref{Gal}), 
we obtain 
\begin{corollary}
Let $\hat{\mfrak{M}}$ be a  free 
$(\varphi, \hat{G})$-module.
Then there exists an isomorphism 
\[
\mcal{M}_{\whR}(\hat{\mfM}^{\vee})\simeq
\mcal{M}_{\whR}(\hat{\mfM})^{\vee},
\]
which is functorial for $\hat{\mfM}$.
Here  $\mcal{M}_{\whR}(\hat{\mfM})^{\vee}$ is the Cartier dual of 
the strongly divisible lattice $\mcal{M}_{\whR}(\hat{\mfM})$
(cf.\ \cite{Ca2}, Chapter V).
\end{corollary}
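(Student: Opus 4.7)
The plan is to compare both sides through their associated Galois representations, exploiting the fact that the functor $\mcal{M}_{\whR}$ is an equivalence of categories between $\mrm{Mod}^{r,\hat{G},\mrm{fr}}_{/\mfS}$ and $\mrm{Mod}^r_{/S}$ (equivalently, after composing with $T_{\mrm{st}}$, with $\mrm{Rep}^r_{\mbb{Z}_p}(G)$). Since the diagram
\[
T_{\mrm{st}} \circ \mcal{M}_{\whR} \simeq \hat{T}
\]
is commutative, an isomorphism $\mcal{M}_{\whR}(\hat{\mfM}^{\vee}) \simeq \mcal{M}_{\whR}(\hat{\mfM})^{\vee}$ will follow automatically from an isomorphism of the corresponding objects in $\mrm{Rep}^r_{\mbb{Z}_p}(G)$, provided the equivalence is applied at both ends.

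First I would compute the Galois representation attached to the left-hand side. By definition,
\[
T_{\mrm{st}}(\mcal{M}_{\whR}(\hat{\mfM}^{\vee})) \simeq \hat{T}(\hat{\mfM}^{\vee}),
\]
and by Theorem \ref{Gal} this is canonically isomorphic to $\hat{T}^{\vee}(\hat{\mfM})(r)$. Next I would identify the Galois representation attached to the right-hand side. The Cartier duality for strongly divisible lattices developed by Caruso (\cite{Ca2}, Chapter V) is designed precisely so that
\[
T_{\mrm{st}}(\mcal{M}_{\whR}(\hat{\mfM})^{\vee}) \simeq T_{\mrm{st}}(\mcal{M}_{\whR}(\hat{\mfM}))^{\vee}(r) \simeq \hat{T}(\hat{\mfM})^{\vee}(r),
\]
again using the commutative diagram. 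Combining these two computations yields
\[
T_{\mrm{st}}(\mcal{M}_{\whR}(\hat{\mfM}^{\vee})) \simeq T_{\mrm{st}}(\mcal{M}_{\whR}(\hat{\mfM})^{\vee})
\]
as $\mbb{Z}_p[G]$-modules, and since $T_{\mrm{st}}$ is an anti-equivalence (in particular fully faithful), this isomorphism transports back to a canonical isomorphism of strongly divisible lattices.

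The functoriality in $\hat{\mfM}$ comes for free from the functoriality of $\hat{T}$, $\mcal{M}_{\whR}$, the Cartier duality on $(\vphi,\hat{G})$-modules (Theorem \ref{Dual}(2)), and the Cartier duality on strongly divisible lattices; no additional checking is required because fully faithfulness of $T_{\mrm{st}}$ upgrades a natural transformation of representations to a natural transformation of strongly divisible lattices.

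The main obstacle I expect is not in the argument above but in making sure the Galois compatibility of Caruso's Cartier duality for strongly divisible lattices, namely the identification $T_{\mrm{st}}(\mcal{M}^{\vee}) \simeq T_{\mrm{st}}(\mcal{M})^{\vee}(r)$, is cited in exactly the form needed; this requires referring carefully to Chapter V of \cite{Ca2}. Given that, the proof reduces to stringing together three canonical isomorphisms, and no explicit computation in $S$ or $\whR$ is needed.
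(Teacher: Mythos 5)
Your proposal is correct and takes essentially the same approach as the paper; the paper's own proof is just the one-line remark ``By the Galois compatibility of a duality (cf.\ Theorem \ref{Gal}), we obtain'' the result, and your write-up simply makes explicit the chain of canonical isomorphisms (commutativity of $T_{\mrm{st}}\circ\mcal{M}_{\whR}\simeq\hat{T}$, Theorem \ref{Gal}, and the Galois compatibility of Caruso's duality $T_{\mrm{st}}(\mcal{M}^{\vee})\simeq T_{\mrm{st}}(\mcal{M})^{\vee}(r)$) plus full faithfulness of $T_{\mrm{st}}$ to transport the isomorphism back to strongly divisible lattices.
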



\begin{thebibliography}{10}
\bibitem[Br]{Br}
Christophe Breuil, 
\emph{Repr\'esentations {$p$}-adiques semi-stables et
      transversalit\'e de Griffiths}, 
      Math. Ann. {\bf 307},
      191--224 (1997). 


\bibitem[Ca1]{Ca2}
Xavier Caruso, 
\emph{Conjecture de l'in\'erti\'e modre de Serre}, 
      Thesis, Universit\'e de Paris-Sud 11 (2005).

\bibitem[Ca2]{Ca}
Xavier Caruso, 
\emph{Repr\'esentations semi-stables de torsion dans le case 
      $er<p-1$}, 
      J. Reine Angew. Math.\ {\bf 594},
      35--92 (2006).


\bibitem[CL]{CL}
Xavier Caruso and Tong Liu,
\emph{Some bounds for ramification of $p^n$-torsion
      semistable representations}.

\bibitem[Fo]{Fo}
Jean-Marc Fontaine,
\emph{Repr\'esentations p-adiques des corps locaux. I}, 
The Grothendieck
Festschrift, Vol. II, Progr. Math., vol. {\bf 87},
Birkh\"auser Boston, Boston, MA
61--88 (2008).

\bibitem[Ki]{Ki}
Mark Kisin, 
\emph{Crystalline representations and {$F$}-crystals},
Algebraic geometry and number theory, 
Progr. Math., vol. 253,
Birkh\"auser Boston,
Boston, MA,
459--496 (2006).


\bibitem[Li1]{Li1}
Tong Liu,
\emph{Torsion $p$-adic Galois representations and 
      a conjecture of Fontaine}, 
Ann. Sci. \'Ecole Norm. Sup. (4) {\bf 40}, no.4,
633--674 (2007).

\bibitem[Li2]{Li2}
Tong Liu,
\emph{On lattices in sem-stable representations: A
      proof of a conjecture of Breuil}, 
Compositio Math. {\bf 144}(1),
61--88 (2008).

\bibitem[Li3]{Li3}
Tong Liu,
\emph{A note on lattices in semi-stable representations}, 
      Math. Ann. {\bf 346}
117--138 (2010).

\bibitem[Wi]{Wi}
Jean-Pierre Wintenberger, 
\emph{Le corps des normes de certaines extensions infinies de corps
      locaux; applications},
Ann. Sci. \'Ecole Norm. Sup. (4), vol. {\bf 16},
59--89 (1983).



\end{thebibliography}
\end{document}